\numberwithin{equation}{section}
\newtheorem{thm}{Theorem}[section]
\newtheorem{cor}[thm]{Corollary}
\newtheorem{lem}[thm]{Lemma}
\newtheorem{prop}[thm]{Proposition}
\theoremstyle{definition}
\newtheorem{defn}[thm]{Definition}
\newtheorem{exmp}[thm]{Example}
\newtheorem{rem}[thm]{Remark}
\numberwithin{equation}{section}
\newcommand\Hom{\operatorname{Hom}}
\newcommand\Ext{\operatorname{Ext}}
\newcommand{\Cech}{{{\Check{C}}}_{\xx}}
\newcommand\cd{\operatorname{cd}}
\newcommand\Rad{\operatorname{Rad}}
\newcommand\Ann{\operatorname{Ann}}
\newcommand\Ass{\operatorname{Ass}}
\newcommand\Supp{\operatorname{Supp}}
\newcommand\Tor{\operatorname{Tor}}
\newcommand{\xx}{\underline x}
\newcommand{\qism}{\stackrel{\sim}{\longrightarrow}}
\newcommand\grade{\operatorname{grade}}
\newcommand\depth{\operatorname{depth}}
\newcommand\im{\operatorname{im}}
\begin{document}

\title[Endomorphisms of local cohomology]{A note on endomorphisms of local cohomology modules}

\begin{abstract} Let $I$ denote an ideal of a local ring $(R,\mathfrak{m})$ of dimension $n$. Let
$M$ denote a finitely generated $R$-module. We study the endomorphism ring of the
local cohomology module $H^c_I(M), c = \grade (I,M)$. In particular there is a
natural homomorphism $\Hom_{\hat{R}^I}(\hat{M}^I, \hat{M}^I)\to \Hom_{R}(H^c_{I}(M),H^c_{I}(M))$,
where $\hat{\cdot}^I$ denotes the $I$-adic completion functor. We prove sufficient
conditions such that it becomes an isomorphism. Moreover, we study a homomorphism
of two such endomorphism rings of local cohomology modules for two ideals $J \subset I$
with the property $\grade(I,M) = \grade(J,M)$. Our results extends
constructions known in the case of $M = R$ (see e.g.  \cite{h1}, \cite{p7}, \cite{p1}).
\end{abstract}

\author[W. Mahmood]{Waqas Mahmood}
\address{Quaid-I-Azam University, Islamabad Pakistan}%
\email{waqassms$@$gmail.com}

\author[Z. Zahid]{Zohaib Zahid}
\address{Abdus Salam School of Mathematical Sciences, GCU, Lahore Pakistan}
\email{Zohaib\_zahid@hotmail.com}

\thanks{This research was partially supported by Higher Education Commission, Pakistan}
\subjclass[2000]{13D45}
\keywords{Local cohomology, Endomorphism Ring, Completion Functor ,Cohomologically Complete Intersection}%

\maketitle

\section{Introduction}
Let $(R,\mathfrak m,k)$ be a local Noetherian ring. For an ideal $I\subset R$ and an $R$-module $M$ we
denote by $H^i_I(M)$, $i\in \mathbb{Z}$, the local cohomology modules of $M$ with
respect to $I$ (see \cite{b} and \cite{goth} for the definition). In recent papers
(see e.g. \cite{e}, \cite{h1}, \cite{pet1}, \cite{p7}, \cite{p1}, \cite{p6}) there is some interest
in the study of the endomorphism ring $\Hom_R(H^i_I(R),H^i_I(R))$ for certain ideals
$I$ and several $i \in \mathbb{N}$. Note that the first results in this direction
were obtained by M. Hochster and C. Huneke (see \cite{HH}) for the case of $I =
\mathfrak{m}$ and $i = \dim R$.

In the case of $(R, \mathfrak{m})$ an $n$-dimensional Gorenstein ring and $I \subset R$ an ideal with
$c = \grade I$ it was shown (see \cite[Theorem 1.1]{p6}) that there is a natural homomorphism
\[
\hat{R}^I \to \Hom_R(H^c_I(R),H^c_I(R))
\]
where $\hat{R}^I $ denotes the $I$-adic completion of $R$. Moreover there are results when this homomorphism is in fact an isomorphism (see
also \cite{h1}, \cite{p7}). The main subject of the present paper
is the extension of some of these results to the case of a finitely generated $R$-module $M$.
More precisely we shall prove the following result:

\begin{thm} \label{1.1} Let $I$ denote an ideal of $R$ with $\dim(R)=n$. Let $M$ denote a finitely generated
$R$-module and $c = \grade (I,M)$. Then there is a natural homomorphism
\[
\Hom_{\hat{R}^I}(\hat{M}^I, \hat{M}^I)\to \Hom_{R}(H^c_{I}(M),H^c_{I}(M)).
\]
Suppose that $H^i_I(M) = 0$ for all $i \not=c$ then it is an isomorphism. Moreover
\[
\Ext^i_{\hat{R}^I}(\hat{M}^I,\hat{M}^I) \cong \Ext^{i+c}_R(H^c_I(M), M)\cong \Ext^{i}_R(H^c_I(M), H^c_I(M))
\]
for all $i \in \mathbb{Z}$.
\end{thm}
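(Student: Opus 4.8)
The plan is to reduce everything to a statement about $\mathbf{R}\Gamma_I$ and to use the known local duality formalism after passing to the $I$-adic completion. First I would pass to $\hat{R}^I$: since local cohomology is insensitive to $I$-adic completion, $H^i_I(M)\cong H^i_{I\hat{R}^I}(\hat{M}^I)$ for all $i$, and $\Hom_R$-modules that are $I$-torsion or $I$-complete are computed equally over $R$ and over $\hat{R}^I$. Thus we may assume $R$ is $I$-adically complete. Under the hypothesis $H^i_I(M)=0$ for $i\neq c$, the complex $\mathbf{R}\Gamma_I(M)$ is quasi-isomorphic to a single module $H^c_I(M)$ placed in cohomological degree $c$; equivalently $\mathbf{R}\Gamma_I(M)\simeq H^c_I(M)[-c]$ in the derived category $D(R)$.

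The key computation is then with derived Hom. I would use the adjunction/reflection property of $\mathbf{R}\Gamma_I$: for $M$ finitely generated one has
\[
\mathbf{R}\Hom_R\bigl(\mathbf{R}\Gamma_I(M),\mathbf{R}\Gamma_I(M)\bigr)\simeq \mathbf{R}\Hom_R\bigl(\mathbf{R}\Gamma_I(M), M\bigr)\simeq \mathbf{L}\Lambda^I(M)\otimes^{\mathbf L}_R \cdots,
\]
more precisely the composition $\mathbf{R}\Hom_R(\mathbf{R}\Gamma_I(M),-)$ with the natural map $\mathbf{R}\Gamma_I(M)\to M$ induces an isomorphism $\mathbf{R}\Hom_R(\mathbf{R}\Gamma_I(M),\mathbf{R}\Gamma_I(M))\qism \mathbf{R}\Hom_R(\mathbf{R}\Gamma_I(M),M)$, because the cone of $\mathbf{R}\Gamma_I(M)\to M$ has no $I$-torsion cohomology while $\mathbf{R}\Gamma_I(M)$ is built from $I$-torsion modules (Grothendieck–Greenlees–May duality). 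Combined with $\mathbf{R}\Gamma_I(M)\simeq H^c_I(M)[-c]$, taking cohomology in degree $i$ of $\mathbf{R}\Hom_R(\mathbf{R}\Gamma_I(M),M)$ yields $\Ext^{i+c}_R(H^c_I(M),M)$, while taking cohomology of $\mathbf{R}\Hom_R(\mathbf{R}\Gamma_I(M),\mathbf{R}\Gamma_I(M))$ yields $\Ext^i_R(H^c_I(M),H^c_I(M))$; this gives the second isomorphism of the theorem and, in degree $i=0$, identifies $\Hom_R(H^c_I(M),H^c_I(M))$ with $H^0\,\mathbf{R}\Hom_R(\mathbf{R}\Gamma_I(M),M)$. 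For the identification with $\Hom_{\hat{R}^I}(\hat M^I,\hat M^I)$ (and the $\Ext$ analogue) I would use that $\mathbf{R}\Hom_R(\mathbf{R}\Gamma_I(M),M)\simeq \mathbf{R}\Hom_{\hat{R}^I}\bigl(\mathbf{L}\Lambda^I(M),\hat{M}^I\bigr)$ and that for $M$ finitely generated $\mathbf{L}\Lambda^I(M)\simeq \hat{M}^I$ is concentrated in degree $0$, so $H^i$ of the right side is $\Ext^i_{\hat{R}^I}(\hat M^I,\hat M^I)$. The natural homomorphism in the statement is then simply the degree-zero edge map coming from $\mathbf{R}\Hom$ of a module into a complex, which exists without any vanishing hypothesis.

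I would organize the write-up as: (1) reduce to the complete case and recall $\mathbf{R}\Gamma_I$, $\mathbf{L}\Lambda^I$ and the GGM duality isomorphism $\mathbf{R}\Hom_R(\mathbf{R}\Gamma_I(X),Y)\simeq \mathbf{R}\Hom_R(X,\mathbf{L}\Lambda^I Y)$; (2) record that $\mathbf{L}\Lambda^I(M)\simeq \hat{M}^I$ for $M$ finitely generated and that $\mathbf{R}\Gamma_I(M)\to M$ induces an iso on $\mathbf{R}\Hom_R(\mathbf{R}\Gamma_I(M),-)$; (3) under the vanishing hypothesis replace $\mathbf{R}\Gamma_I(M)$ by $H^c_I(M)[-c]$ and read off the three $\Ext$ modules by taking cohomology; (4) check that the abstractly constructed isomorphism in degree $0$ agrees with the concretely described natural homomorphism. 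The main obstacle I anticipate is step (4): constructing the natural map $\Hom_{\hat{R}^I}(\hat M^I,\hat M^I)\to \Hom_R(H^c_I(M),H^c_I(M))$ directly — an $\hat{R}^I$-endomorphism $\varphi$ of $\hat M^I$ induces one on $H^c_I(\hat M^I)=H^c_{I\hat R^I}(\hat M^I)=H^c_I(M)$ by functoriality — and then verifying that this hands-on map coincides with the edge homomorphism of the derived-category argument; this compatibility, rather than any single vanishing or duality input, is where care is needed. A secondary point to be careful about is that the isomorphism $\Ext^{i+c}_R(H^c_I(M),M)\cong \Ext^i_R(H^c_I(M),H^c_I(M))$ uses the vanishing hypothesis in an essential way (to collapse $\mathbf{R}\Gamma_I(M)$), whereas the first map and the outer terms make sense more generally, so the logical dependencies of each claim on the hypothesis should be stated precisely.
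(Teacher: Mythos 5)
Your proposal is correct in substance but packages the argument differently from the paper. The paper deliberately avoids derived categories: it forms the truncation complex $0\to H^c_I(M)[-c]\to \Gamma_I(E^{\cdot}_R(M))\to C^{\cdot}_M(I)\to 0$, applies $\Hom_R(\cdot,E^{\cdot}_R(M))$, identifies the cohomology of the middle term with ${\rm L}_0\Lambda^I(\Ext^j_R(M,M))\cong \Ext^j_{\hat{R}^I}(\hat{M}^I,\hat{M}^I)$ via the free resolution $L_{\xx}$ of the \v{C}ech complex and a degenerating spectral sequence (Theorem \ref{2.3}), and extracts the natural map from the long exact cohomology sequence in degree $0$; the vanishing hypothesis makes $C^{\cdot}_M(I)$ exact and turns the connecting maps into isomorphisms. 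Your route replaces the truncation complex by the collapse $\mathbf{R}\Gamma_I(M)\simeq H^c_I(M)[-c]$ and replaces the $L_{\xx}$/spectral-sequence computation by Greenlees--May duality; since the paper's Theorem \ref{2.3} is exactly the avatar of that duality used here, the two proofs rest on the same input. Your direct construction of the natural homomorphism (an endomorphism of $\hat{M}^I$ induces one of $H^c_{I\hat{R}^I}(\hat{M}^I)\cong H^c_I(M)$ by functoriality and flat base change) is a genuinely different, and cleaner, way to get the map in the general case; and since the theorem only asserts the existence of \emph{a} natural homomorphism, your worry in step (4) about matching it with the derived-category edge map is not actually needed for the statement as posed.

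Two points need repair. First, your justification that $\mathbf{R}\Hom_R(\mathbf{R}\Gamma_I(M),\mathbf{R}\Gamma_I(M))\to \mathbf{R}\Hom_R(\mathbf{R}\Gamma_I(M),M)$ is an isomorphism is wrong as stated: the cone $C$ of $\mathbf{R}\Gamma_I(M)\to M$ does in general have $I$-torsion cohomology (indeed $H^i(C)\cong H^{i+1}_I(M)$ for $i\geq 1$). The correct reason is that $\mathbf{R}\Gamma_I(C)=0$ by idempotency of $\mathbf{R}\Gamma_I$, hence $\mathbf{L}\Lambda^I(C)=0$, hence $\mathbf{R}\Hom_R(T,C)\simeq \mathbf{R}\Hom_R(T,\mathbf{L}\Lambda^I C)=0$ for any torsion complex $T$ by the same duality you already invoke. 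Second, the opening reduction to the $I$-adically complete case is asserted too casually for the modules $\Ext^{i+c}_R(H^c_I(M),M)$: the first argument is not finitely generated, so flat base change does not apply directly, and $M$ versus $\hat{M}^I$ in the second argument must be reconciled (again via $\mathbf{R}\Hom_R(T,M)\simeq\mathbf{R}\Hom_R(T,\mathbf{L}\Lambda^I M)$ for $T$ torsion). Since the tools needed to justify the reduction are the same as those used afterwards, you may as well drop the reduction and run the argument over $R$ itself, as the paper does.
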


That is, the endomorphism rings of $M$ and those of $H^c_I(M)$ are closely
related by a natural map (see Theorem \ref{2} and Corollary \ref{3a} for the proof).
These investigations are related to the endomorphism ring
of $D(H^c_I(M)):= \Hom_R(H^c_I(M), E), E$ is the injective hull of the residue field $k = R/\mathfrak{m}$.
Let $(R,\mathfrak{m})$ denote a complete local ring of dimension $n$. Then there is a natural homomorphism
\[
\Hom_{R}(H^c_{I}(M),H^c_{I}(M)) \to \Hom_{R}(D(H^c_{I}(M)),D(H^c_{I}(M)))
\]
that is an isomorphism (see Lemma \ref{3}). Moreover there are relative versions
of the above homomorphisms relating two ideals $J \subset I$ of the same grade(see Theorem \ref{21.3}). Also there are some necessary conditions in order to prove the natural homomorphism $\Hom_{R}(M,M)\to \Hom_{R}(H^c_{I}(M),H^c_{I}(M))$ is an isomorphism. That is

\begin{thm} \label{1.2} Let $J \subset I$ denote two ideals of a complete local ring $(R, \mathfrak{m})$ of $\dim(R)=n$.
Let $M$ be a finitely generated $R$-module with $H^i_J(M)= 0$ for all $\neq c = \grade (I,M) = \grade(J,M)$. Suppose that $\Rad IR_{\mathfrak{p}}= \Rad JR_{\mathfrak{p}}$ for all ${\mathfrak{p}}\in V(J)\cap \Supp_R(M)$ such that $\depth_{R_\mathfrak{p}} (M_{\mathfrak{p}})\leq c$.
Then the natural homomorphism
\[
\Hom_{R}(M,M)\to \Hom_{R}(H^c_{I}(M),H^c_{I}(M))
\]
is an isomorphism.
\end{thm}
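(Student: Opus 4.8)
\emph{Proof strategy.} The plan is to deduce the statement from Theorem~\ref{1.1} applied to the ideal $I$ itself. Since $(R,\mathfrak m)$ is complete and $I\subseteq\mathfrak m$, the ring $R$ is $I$-adically complete (a complete Noetherian local ring is complete with respect to every ideal), so that $\hat R^I=R$ and $\hat M^I=M$. Granting for the moment that $H^i_I(M)=0$ for all $i\neq c$, Theorem~\ref{1.1} then produces the isomorphism
\[
\Hom_R(M,M)=\Hom_{\hat R^I}(\hat M^I,\hat M^I)\To\Hom_R(H^c_I(M),H^c_I(M)),\qquad f\mapsto H^c_I(f),
\]
which is exactly the natural homomorphism in the statement. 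So the entire point is to establish the vanishing $H^i_I(M)=0$ for $i\neq c$ out of the hypotheses on $J$ and on the radicals.

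For this I would use the change-of-ideals spectral sequence. As $J\subseteq I$ one has $\Gamma_I=\Gamma_I\circ\Gamma_J$, whence a Grothendieck spectral sequence $E_2^{p,q}=H^p_I(H^q_J(M))\Longrightarrow H^{p+q}_I(M)$. Since $H^q_J(M)=0$ for $q\neq c$ it collapses and gives $H^{n}_I(M)\cong H^{n-c}_I(H^c_J(M))$ for every $n$. Thus it is enough to show that $H^c_J(M)$ is an $I$-torsion module: then $H^p_I(H^c_J(M))=0$ for $p\geq1$ and $H^c_I(M)\cong\Gamma_I(H^c_J(M))=H^c_J(M)$, so that $H^i_I(M)=0$ for all $i\neq c$ (the vanishing for $i<c$ being in any case automatic from $\grade(I,M)=c$). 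To obtain the $I$-torsionness I would invoke the elementary fact that an $R$-module $N$ with $\Ass_R N\subseteq V(I)$ is automatically $I$-torsion: indeed $\Ass_R(N/\Gamma_I(N))=\Ass_R(N)\setminus V(I)=\emptyset$, and a nonzero module over a Noetherian ring has an associated prime, so $N=\Gamma_I(N)$. Hence everything comes down to the inclusion $\Ass_R H^c_J(M)\subseteq V(I)$.

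This inclusion is the heart of the matter, and it is where the two hypotheses enter. Let $\mathfrak q\in\Ass_R H^c_J(M)$. Since $H^c_J(M)$ is $J$-torsion with $\Supp_R H^c_J(M)\subseteq\Supp_R M$, we have $\mathfrak q\in V(J)\cap\Supp_R(M)$. Because $\Ass$ and local cohomology commute with localization, $\mathfrak q R_{\mathfrak q}\in\Ass_{R_{\mathfrak q}}H^c_{JR_{\mathfrak q}}(M_{\mathfrak q})$, so $\Gamma_{\mathfrak q R_{\mathfrak q}}(H^c_{JR_{\mathfrak q}}(M_{\mathfrak q}))\neq0$. Applying the change-of-ideals spectral sequence over $R_{\mathfrak q}$ to the inclusion $JR_{\mathfrak q}\subseteq\mathfrak q R_{\mathfrak q}$ and using $H^i_{JR_{\mathfrak q}}(M_{\mathfrak q})=0$ for $i<c$ (valid since $\grade(J,M)=c$), a comparison in total degree $c$ (where only the term $E_2^{0,c}$ survives) yields $H^c_{\mathfrak q R_{\mathfrak q}}(M_{\mathfrak q})\cong\Gamma_{\mathfrak q R_{\mathfrak q}}(H^c_{JR_{\mathfrak q}}(M_{\mathfrak q}))\neq0$, i.e. $\depth_{R_{\mathfrak q}}(M_{\mathfrak q})\leq c$. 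Now the hypothesis on the radicals applies at $\mathfrak q$: $IR_{\mathfrak q}\subseteq\Rad IR_{\mathfrak q}=\Rad JR_{\mathfrak q}\subseteq\mathfrak q R_{\mathfrak q}$, hence $I\subseteq\mathfrak q$. This proves $\Ass_R H^c_J(M)\subseteq V(I)$, and the same computation identifies $H^c_I(M)$ with $H^c_J(M)$, in agreement with the relative statement of Theorem~\ref{21.3}.

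I expect the delicate point to be the localization argument pinning down $\depth_{R_{\mathfrak q}}(M_{\mathfrak q})\leq c$ for $\mathfrak q\in\Ass_R H^c_J(M)$: one has to extract genuine arithmetic information about the (far from finitely generated) module $H^c_J(M)$ from the vanishing $H^i_J(M)=0$, $i\neq c$. Once this is in hand, the radical condition does precisely what it is built for, and the remainder — the collapse of the spectral sequence together with the appeal to Theorem~\ref{1.1} and the identification $\hat R^I=R$ — is routine.
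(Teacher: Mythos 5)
Your proof is correct, but it takes a genuinely different route from the paper's. The paper proves this statement (its Corollary \ref{0}) by composing two isomorphisms: Corollary \ref{3a} applied to $J$ (using $H^i_J(M)=0$ for $i\neq c$ together with $\hat R^J=R$, $\hat M^J=M$) gives $\Hom_R(M,M)\cong\Hom_R(H^c_J(M),H^c_J(M))$, and Theorem \ref{21.3}(b) gives $\Hom_R(H^c_J(M),H^c_J(M))\cong\Hom_R(H^c_I(M),H^c_I(M))$; there the radical/depth hypothesis enters through the vanishing $\Ext^c_R(I^{\alpha}/J^{\alpha},M)=0$, proved by showing $\grade(J^{\alpha}:_R I^{\alpha},M)\geq c+1$ via the formula $\grade(\mathfrak a,M)=\inf\{\depth_{R_{\mathfrak p}}M_{\mathfrak p}:\mathfrak p\in V(\mathfrak a)\cap\Supp_R M\}$. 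You instead transfer the cohomological-complete-intersection property from $J$ to $I$: you show $\Ass_R H^c_J(M)\subseteq V(I)$ (the depth bound $\depth_{R_{\mathfrak q}}M_{\mathfrak q}\leq c$ at an associated prime $\mathfrak q$ coming from the localized composite-functor spectral sequence in total degree $c$, after which the radical hypothesis forces $I\subseteq\mathfrak q$), deduce that $H^c_J(M)$ is $I$-torsion, hence that $H^i_I(M)=0$ for all $i\neq c$ with $H^c_I(M)\cong H^c_J(M)$, and then apply Corollary \ref{3a} directly to $I$. Both arguments exploit the hypothesis in the same spirit --- to rule out primes containing $J$ but not $I$ at which the depth of $M$ drops to $c$ or below --- but yours yields the stronger intermediate conclusion that $I$ itself satisfies $H^i_I(M)=0$ for all $i\neq c$, which the paper's route never establishes. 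The only steps worth writing out in full are the localization of $\Ass$ and of local cohomology at $\mathfrak q$ for the non-finitely-generated module $H^c_J(M)$, and the fact that a module all of whose associated primes lie in $V(I)$ is $I$-torsion; all are standard over a Noetherian ring, so I see no gap.
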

This is a generalization of \cite[Theorem 1.2]{p7}) to the case of a finitely generated $R$-module $M$.
In the case of an ideal $J$ generated by an $M$-regular sequence of length $c$ in a complete
local ring $R$ it follows that $\Hom_R(M,M) \cong \Hom_{R}(H^c_{J}(M),H^c_{J}(M))$. So the above result
gives some necessary conditions for the endomorphism ring of $H^c_I(M)$ to be isomorphic
to that of $M$. The proof of Theorem \ref{1.2} is shown in \ref{0}.

\section{Preliminaries and auxiliary results}
In this section we will fix some notation  and summarize a few preliminaries and auxiliary results.
For unexplained terminologies we refer to the textbooks \cite{Ma} and \cite{w}.
Let $(R,\mathfrak m)$ denote a commutative local Noetherian ring with $\mathfrak m$ its maximal ideal
and $k= R/{\mathfrak m}$ its residue field. Moreover we will denote the Matils dual functor by
$D(\cdot):= \Hom_R(\cdot, E)$ where $E= E_R(k)$ denotes the injective hull of $k$.

\begin{lem}\label{4.5}
Let $M,N$ be two arbitrary $R$-modules. Then  for all $i\in \mathbb{Z}$:
\begin{itemize}
\item[(1)] $\Ext^{i}_R(N,D(M))\cong D(\Tor_{i}^R(N, M))$.

\item[(2)] If $N$ is in addition finitely generated then
\[
D(\Ext^{i}_R(N,M))\cong \Tor_{i}^R(N, D(M)).
\]
\end{itemize}
\end{lem}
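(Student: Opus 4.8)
The plan is to prove Lemma \ref{4.5} by reduction to the standard adjointness between $\Hom$ and $\otimes$, exploiting the fact that $E$ is an injective $R$-module.

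\medskip

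\noindent\textbf{Part (1).} First I would recall the classical tensor-hom adjunction: for $R$-modules $N$, $M$ there is a natural isomorphism $\Hom_R(N,\Hom_R(M,E)) \cong \Hom_R(N\otimes_R M, E)$, i.e. $\Hom_R(N, D(M)) \cong D(N\otimes_R M)$. Since $E$ is injective, the functor $D(\cdot) = \Hom_R(\cdot, E)$ is exact, so it carries a projective resolution $P_\bullet \to N$ to a complex whose cohomology computes the relevant derived functors. Concretely, take a projective resolution $P_\bullet \to N$. Then $\Ext^i_R(N, D(M))$ is the $i$-th cohomology of $\Hom_R(P_\bullet, D(M))$, which by the adjunction above is naturally isomorphic to the complex $D(P_\bullet \otimes_R M)$. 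Because $D$ is exact, $H^i\big(D(P_\bullet\otimes_R M)\big) \cong D\big(H_i(P_\bullet\otimes_R M)\big) = D(\Tor_i^R(N,M))$. Chaining these identifications gives (1). The only thing to check is naturality of the adjunction isomorphism in the chain variable so that it passes to (co)homology, which is routine.

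\medskip

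\noindent\textbf{Part (2).} Here I would again start from a projective resolution $P_\bullet \to N$, but now $N$ is finitely generated, so we may take each $P_j$ to be a \emph{finitely generated} free module. The key point is that for a finitely generated free module $F \cong R^m$ one has a natural isomorphism $D(\Hom_R(F,M)) = \Hom_R(\Hom_R(F,M),E) \cong F \otimes_R D(M)$; indeed $\Hom_R(R^m,M)\cong M^m$ and $D(M^m)\cong D(M)^m \cong R^m\otimes_R D(M)$, and one checks these identifications are natural in $F$ (this is where finite generation is used — it fails for arbitrary $N$). Applying $D$ to the cochain complex $\Hom_R(P_\bullet, M)$, whose cohomology is $\Ext^i_R(N,M)$, and using exactness of $D$ to commute it past cohomology, we get $D(\Ext^i_R(N,M)) \cong H_i\big(D(\Hom_R(P_\bullet,M))\big) \cong H_i\big(P_\bullet \otimes_R D(M)\big) = \Tor_i^R(N, D(M))$, as desired.

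\medskip

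\noindent The main obstacle — really the only subtle point — is the naturality of the isomorphism $D(\Hom_R(F,M)) \cong F\otimes_R D(M)$ in $F$ over the category of finitely generated free modules, since this is what allows one to apply it termwise to the resolution $P_\bullet$ and obtain an isomorphism of complexes rather than just a termwise isomorphism. Everything else (tensor-hom adjunction, exactness of $D$ via injectivity of $E$, commuting an exact functor past the cohomology of a complex) is standard homological algebra. I would also note in passing that both statements are independent of the choice of projective resolution by the usual comparison-theorem argument, and that part (1) can alternatively be obtained from part (2) by Matlis duality when $R$ is complete and the modules are suitably finite, but the direct resolution argument above avoids any such hypotheses.
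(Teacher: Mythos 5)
Your argument is correct; the paper gives no proof of this lemma at all, simply citing it as well known (Huneke's lecture notes, Example 3.6), and your resolution-plus-adjunction argument, with the exactness of $D(\cdot)=\Hom_R(\cdot,E)$ used to commute duality past (co)homology and finite generation of $N$ (over the Noetherian ring $R$) used to get a resolution by finitely generated free modules for the natural isomorphism $P_j\otimes_R D(M)\cong D(\Hom_R(P_j,M))$, is exactly the standard proof that reference supplies.
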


\begin{proof} The proof is well known for details see e.g. \cite[Example 3.6]{h}.
\end{proof}

Let $I$ be an ideal of $R$ and let $M$ denote an $R$-module. For the basics on local
cohomology modules $H^i_I(M)$ we refer to the textbook \cite{b}. Note that
\[
\grade(I,M) = \inf\{ i \in \mathbb{Z} : H^i_I(M) \not= 0\}
\]
for a finitely generated $R$-module $M$.
As a first result we will give a criterion of calculating the $\grade(I,M)$  in terms of
vanishing of the $\Tor$-modules.

\begin{prop}\label{31}
Let $I \subset R$ be an ideal. Let $M$ denote a finitely generated $R$-module with $c= \grade(I, M)$.
Let  $N$ be an  $R$-module such that $\Supp_R(N)\subseteq V(I)$. Then the following holds:
\begin{itemize}
  \item [(a)] $\Ext^{i}_R(N, M)=0$ for all $i < c$ and there is a natural isomorphism
\[
\Ext^{c}_R(N, M)\cong \Hom_R(N, H^c_I(M)).
\]
  \item [(b)] $\Tor_{i}^R(N, D(M))= 0 $ for all $i < c$ and there is a natural isomorphism
  \[
\Tor_{c}^R(N, D(M))\cong N\otimes_R D(H^c_I(M)).
\]
\end{itemize}
\end{prop}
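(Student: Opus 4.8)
The plan is to reduce everything to a standard spectral sequence / dimension-shifting argument built from a single ingredient: if $\Supp_R(N) \subseteq V(I)$, then $\Ext^i_R(N,-)$ and $\Tor^R_i(N,-)$ only see the $I$-torsion part of a module, and more precisely there is a natural isomorphism $\Ext^i_R(N,M) \cong \Ext^i_R(N,\Gamma_I(M))$-type behaviour at the bottom of the resolution. First I would recall that for a finitely generated $M$ with $c = \grade(I,M)$ one has $H^i_I(M) = 0$ for $i < c$, so the truncation of the \v{C}ech / injective resolution computing $\operatorname{R}\Gamma_I(M)$ starts in cohomological degree $c$. Then I would run the hyper-Ext spectral sequence $E_2^{p,q} = \Ext^p_R(N, H^q_I(M)) \Rightarrow \operatorname{Ext}^{p+q}_R(N, \operatorname{R}\Gamma_I(M))$; since $\Supp_R(N)\subseteq V(I)$, the natural map $\operatorname{R}\Hom_R(N,M) \to \operatorname{R}\Hom_R(N,\operatorname{R}\Gamma_I(M))$ is an isomorphism (every $\Ext^i_R(N,M)$ is $I$-torsion and $\operatorname{R}\Gamma_I$ is idempotent, or equivalently $N$ is already $I$-torsion so $\operatorname{R}\Hom_R(N,-)$ factors through $\operatorname{R}\Gamma_I$). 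Combining the vanishing $H^q_I(M) = 0$ for $q < c$ with the spectral sequence immediately gives $\Ext^i_R(N,M) = 0$ for $i < c$ and the edge isomorphism $\Ext^c_R(N,M) \cong \operatorname{Hom}_R(N, H^c_I(M))$, which is part (a).

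For part (b) I would either dualize part (a) using Lemma \ref{4.5} over a complete ring and then descend, or, more cleanly, argue directly and symmetrically: replace the hyper-Ext spectral sequence by the hyper-Tor spectral sequence $E^2_{p,q} = \Tor^R_p(N, H^q_I(D(M))^{\vee}\text{-type terms})$ — but it is smoother to apply Lemma \ref{4.5}(1) with $D(M)$ in place of an arbitrary module. Concretely, $\Ext^i_R(N, D(M)) \cong D(\Tor^R_i(N,M))$ is \emph{not} quite what I want; instead I use that $D(H^c_I(M))$ is the relevant object. The cleanest route: apply part (a) is false for $D(M)$ since $D(M)$ need not be finitely generated, so instead I would directly dualize. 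Note $\Tor^R_i(N, D(M)) \cong D(\Ext^i_R(N,M))$ by Lemma \ref{4.5}(1) (this does not need $N$ or $M$ finitely generated — it is the first isomorphism, valid for all modules). Hence by part (a), $\Tor^R_i(N,D(M)) \cong D(\Ext^i_R(N,M)) = 0$ for $i < c$, and
\[
\Tor^R_c(N, D(M)) \cong D(\Ext^c_R(N,M)) \cong D(\operatorname{Hom}_R(N, H^c_I(M))) \cong N \otimes_R D(H^c_I(M)),
\]
where the last isomorphism is the standard adjunction/Hom-tensor duality $D(\operatorname{Hom}_R(N,X)) \cong N \otimes_R D(X)$ valid for finitely generated $N$ (this is exactly Lemma \ref{4.5}(2) applied with $M$ replaced by $X$, or a direct check). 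This handles (b).

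The main obstacle I anticipate is not the formal machinery but keeping track of \emph{which} finiteness hypotheses each isomorphism in Lemma \ref{4.5} requires, since $H^c_I(M)$ and $D(M)$ are typically not finitely generated: the identity $\Tor^R_i(N,D(M)) \cong D(\Ext^i_R(N,M))$ needs $M$ finitely generated (to commute $D$ past the Ext), which we have, while $D(\operatorname{Hom}_R(N,X)) \cong N\otimes_R D(X)$ needs $N$ finitely generated, which we also have. A secondary point to verify carefully is the naturality of the edge isomorphism in (a) and the identification of $\operatorname{R}\Hom_R(N,M) \simeq \operatorname{R}\Hom_R(N,\operatorname{R}\Gamma_I(M))$ — this is where $\Supp_R(N)\subseteq V(I)$ is used, and one should phrase it so that no Artinian or Noetherian hypothesis on $N$ sneaks in. Once these bookkeeping points are settled, both (a) and (b) follow formally, and the isomorphisms are automatically natural in $N$ because they are edge maps of functorial spectral sequences.
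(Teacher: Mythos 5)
The paper itself offers no written proof of this Proposition (it cites \cite[Theorem 2.3]{p1} for $M=R$ and asserts that the same argument works for $M$), so your attempt has to stand on its own. Part (a) does: since $\Supp_R(N)\subseteq V(I)$, every homomorphism from $N$ into an injective module $E$ factors through $\Gamma_I(E)$, so $\Hom_R(N,E^{\cdot}_R(M))=\Hom_R(N,\Gamma_I(E^{\cdot}_R(M)))$ for a minimal injective resolution; as $\Gamma_I(E^{\cdot}_R(M))^i=0$ for $i<c$, the vanishing for $i<c$ and the identification $\Ext^c_R(N,M)\cong\Hom_R(N,H^c_I(M))$ follow from left exactness of $\Hom_R(N,\cdot)$ (your spectral-sequence phrasing is an equivalent packaging of this). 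This works for arbitrary $N$ and is exactly Schenzel's argument.

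Part (b), however, has a genuine gap. You deduce it from (a) via $\Tor^R_i(N,D(M))\cong D(\Ext^i_R(N,M))$ and $D(\Hom_R(N,X))\cong N\otimes_R D(X)$. Both isomorphisms require $N$ to be finitely generated (in fact finitely presented): the first is Lemma \ref{4.5}(2), not (1) as you cite, and the finiteness hypothesis there sits on $N$, not on $M$ as you assert in your closing paragraph. But the Proposition assumes only $\Supp_R(N)\subseteq V(I)$, and the paper later applies (b) precisely with $N=H^c_I(M)$ --- for instance in Lemma \ref{3}, where $\Tor^R_c(H^c_I(M),D(M))\cong H^c_I(M)\otimes_R D(H^c_I(M))$ is needed --- and $H^c_I(M)$ is essentially never finitely generated. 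So your argument establishes (b) only in a special case that misses the intended applications. The correct route is the direct one, parallel to (a): $D(E^{\cdot}_R(M))$ is a left resolution of $D(M)$ by flat modules, and for $I$-torsion $N$ the natural surjection $N\otimes_R D(E)\to N\otimes_R D(\Gamma_I(E))$ is an isomorphism for every injective $E$ (this is the Tor half of \cite[Lemma 2.2]{p1}); hence $\Tor^R_i(N,D(M))\cong H_i(N\otimes_R D(\Gamma_I(E^{\cdot}_R(M))))$, which vanishes for $i<c$ and, by right exactness of $N\otimes_R(\cdot)$ and exactness of $D$, equals $N\otimes_R D(H^c_I(M))$ in degree $c$.
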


\begin{proof} In the case of $M = R$ the result was shown by Schenzel (see \cite[Thoerem 2.3]{p1}).
For a finitely generated $R$-module $M$ the statement follows by the same arguments. So,
we skip the details.
\end{proof}

As an application of Proposition \ref{31} we will give a characterization of $\grade(I,M)$.
For $M = R$ this is shown in \cite[Corollary 2.4]{p1}.

\begin{cor}
With the notation of Proposition \ref{31} it follows
\[
\grade(I, M)= \inf\{i\in \mathbb{Z}: \Tor_{i}^R(R/I, D(M))\neq 0\}.
\]
\end{cor}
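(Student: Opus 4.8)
The plan is to deduce the corollary directly from part (b) of Proposition \ref{31} together with part (1) of Lemma \ref{4.5}, applied to the module $N = R/I$, which certainly satisfies $\Supp_R(R/I) = V(I)$. First I would set $c = \grade(I,M)$ and recall that by definition $c = \inf\{i : H^i_I(M) \neq 0\}$ for the finitely generated module $M$.

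Next I would invoke Proposition \ref{31}(b) with $N = R/I$: it gives $\Tor_i^R(R/I, D(M)) = 0$ for all $i < c$, so the infimum on the right-hand side is at least $c$. For the reverse inequality I need $\Tor_c^R(R/I, D(M)) \neq 0$. By the natural isomorphism in Proposition \ref{31}(b) this module is $R/I \otimes_R D(H^c_I(M))$. The key point is that $H^c_I(M)$ is a nonzero $I$-torsion module, hence $D(H^c_I(M))$ is a nonzero $R$-module whose support (being an $I$-torsion Matlis dual) is contained in $V(I)$ — in fact it is an $I$-adically complete nonzero module, so Nakayama-type reasoning applies. More precisely, since $H^c_I(M) \neq 0$ and Matlis duality over a (not necessarily complete) ring is still faithful on the category of modules with artinian-type finiteness, $D(H^c_I(M)) \neq 0$; moreover every element of $D(H^c_I(M))$ is annihilated by a power of $I$, so $I$ is contained in the Jacobson radical of $R/\Ann_R D(H^c_I(M))$, and therefore $R/I \otimes_R D(H^c_I(M)) = D(H^c_I(M))/ID(H^c_I(M)) \neq 0$ by Nakayama's lemma.

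Alternatively, and perhaps more cleanly, I would combine Proposition \ref{31}(a) with Lemma \ref{4.5}(1): we have $\Tor_c^R(R/I, D(M)) \cong D(\Ext^c_R(R/I,M))$ by Lemma \ref{4.5}(1), and $\Ext^c_R(R/I,M) \cong \Hom_R(R/I, H^c_I(M))$ by Proposition \ref{31}(a). The latter is the $I$-torsion submodule $(0 :_{H^c_I(M)} I)$, which is nonzero because $H^c_I(M)$ is a nonzero $I$-torsion module (any nonzero $I$-torsion module has nonzero socle with respect to $I$, as some nonzero element is killed by $I$ after multiplying up through a filtration by powers of $I$). Hence $\Ext^c_R(R/I,M) \neq 0$, and applying the faithful exactness of Matlis duality on this module (it is a submodule of an $I$-torsion module, but one must be a little careful: Matlis duality $D(\cdot)$ sends nonzero modules to nonzero modules, since $\Hom_R(N,E) = 0$ forces $N = 0$ because $E$ is an injective cogenerator) gives $\Tor_c^R(R/I,D(M)) = D(\Ext^c_R(R/I,M)) \neq 0$.

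The main obstacle I anticipate is the non-vanishing at level $c$, i.e. verifying $\Tor_c^R(R/I,D(M)) \neq 0$: the vanishing below $c$ is immediate from Proposition \ref{31}(b). The subtlety is that $R$ need not be complete, so one cannot simply quote that Matlis duality is an exact anti-equivalence on all finitely generated modules; instead I must use only the weaker fact that $D(\cdot)$ kills no nonzero module (because $E$ is a cogenerator), together with the observation that $\Ext^c_R(R/I,M) = \Hom_R(R/I, H^c_I(M)) \neq 0$ since $H^c_I(M)$ is a nonzero module all of whose elements are $I$-torsion. Once that single non-vanishing is in hand, the corollary follows by comparing the two one-sided bounds.
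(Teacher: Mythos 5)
Your second (``cleaner'') argument is essentially the paper's own proof: reduce to the single non-vanishing $\Tor_{c}^R(R/I,D(M))\neq 0$, identify this module with $D(\Ext^c_R(R/I,M))\cong D(\Hom_R(R/I,H^c_I(M)))$ via Lemma \ref{4.5} (part (2), not (1), since $R/I$ is finitely generated) and Proposition \ref{31}(a), and conclude because $\Ext^c_R(R/I,M)\neq 0$ at $c=\grade(I,M)$ and $E$ is an injective cogenerator, so $D$ kills no nonzero module. Note, however, that your first variant is not sound as written --- elements of $D(H^c_I(M))=\Hom_R(H^c_I(M),E)$ need not be annihilated by powers of $I$ (the $I$-torsion of $H^c_I(M)$ is not uniform), and Nakayama's lemma does not apply to this generally non-finitely-generated module --- but since you explicitly fall back on the second argument, the proof stands.
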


\begin{proof}
By Proposition \ref{31} it will be enough to show that $\Tor_{c}^R(R/I, D(M))\neq 0$.
Since $\grade(I,M)= c$  it follows  that (see Proposition \ref{31})
\[
\Tor_{c}^R(R/I, D(M))\cong R/I\otimes_R D(H^c_I(M)).
\]
This  module is isomorphic to $D(\Hom_R(R/I, H^c_I(M)))$ (see Lemma \ref{4.5}). Therefore
 we conclude that (see Proposition \ref{31})
\[
\Tor_{c}^R(R/I, D(M))\cong D(\Ext^c_R(R/I, M))\neq 0.
\]
Since $\Ext^c_R(R/I, M) \not= 0$ because of $ c = \grade(I,M)$ (see \cite[Remark 3.11]{h}) it completes the proof.
\end{proof}
As a final point of this section let us recall the definition of the truncation complex as it
was introduced in \cite[Definition 4.1]{p4}. Let $(R,\mathfrak m)$ be a local ring of dimension $n$ and let $M$
be a finitely generated $R$-module. Let $I\subset R$
be an ideal of $R$ with $\grade(I, M) = c$. Suppose that $E^{\cdot}_R(M)$ is a minimal injective
resolution of $M.$ Then it follows  (see Matlis' structure theory on injective modules \cite{m}) that
\[
E^{\cdot}_R(M)^i\cong\bigoplus\limits_{{\mathfrak p}\in \Supp_R(M)}E_R(R/{\mathfrak p})^{\mu_{i}({\mathfrak p}, M)}.
\]
where $\mu_{i}({\mathfrak p}, M)= \dim_k(\Ext_R^i(k,M))$ and $i\in \mathbb{Z}$. Let $\Gamma_I(\cdot)$ denote the section functor with support in $I$. Then $\Gamma_I(E^{\cdot}_R(M))^i= 0$ for all $i< c$ (see \cite[Definition 4.1]{p4} for more details). Whence there
is an exact sequence
\[
0 \to H^c_I(M) \to \Gamma_I(E^c_R(M)) \to \Gamma(E^{c+1}_R(M)).
\]
That is there is a natural morphism of complexes of $R$-modules $H^c_I(M)[-c] \to \Gamma(E^{\cdot}_R(M))$, where
$H^c_I(M)$ is considered as a complex sitting in homological degree 0.

\begin{defn} \label{2.2}
Define the complex $C^{\cdot}_M(I)$ as the cokernel of the above morphism of complexes.
It is called the truncation complex of $M$ with respect to the ideal $I$. There is a short exact sequence of complexes of $R$-modules
\[
0\rightarrow H^c_I(M)[-c]\rightarrow \Gamma_I(E^{\cdot}_R(M))\rightarrow C^{\cdot}_M(I)\rightarrow 0.
\]
\end{defn}

Note that $H^i(C^{\cdot}_M(I)) = 0$ for all $i\leq c$ or $i> n$ and $H^i(C^{\cdot}_M(I)) \cong H^i_I(M)$ for all $c< i\leq n$.
As a first application there is the following Proposition.

\begin{prop} \label{ps2} Let $I \subset R$ denote an ideal. Let $M$ denote a finitely
generated $R$-module. Suppose that $H^i_I(M) = 0$ for all $i \not= c = \grade(I,M)$. Then there
are natural isomorphisms $\Ext^i_R(N,M) \cong \Ext^{i-c}_R(N,H^c_I(M))$ for all $i \in \mathbb{Z}$
and any $R$-module $N$ with $\Supp_R N \subseteq V(I)$.
\end{prop}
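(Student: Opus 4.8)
The plan is to exploit the truncation complex from Definition \ref{2.2} together with the vanishing hypothesis on the higher local cohomology. Under the assumption $H^i_I(M) = 0$ for all $i \neq c$, the cohomology of $C^{\cdot}_M(I)$ vanishes in every degree, since we already know $H^i(C^{\cdot}_M(I)) = 0$ for $i \leq c$ or $i > n$ and $H^i(C^{\cdot}_M(I)) \cong H^i_I(M)$ for $c < i \leq n$. Hence $C^{\cdot}_M(I)$ is an exact complex, so it is quasi-isomorphic to zero. Applying $\mathrm{R}\Hom_R(N, -)$ to the short exact sequence of complexes
\[
0 \to H^c_I(M)[-c] \to \Gamma_I(E^{\cdot}_R(M)) \to C^{\cdot}_M(I) \to 0
\]
then yields a quasi-isomorphism $\mathrm{R}\Hom_R(N, H^c_I(M)[-c]) \qism \mathrm{R}\Hom_R(N, \Gamma_I(E^{\cdot}_R(M)))$, because the third term contributes nothing in the derived category.

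Next I would identify the two ends. On one side, $\Gamma_I(E^{\cdot}_R(M))$ is a complex of $\Gamma_I$-acyclic (indeed injective after applying $\Gamma_I$, or at least $\Hom_R(N,-)$-acyclic for $N$ supported in $V(I)$) modules computing $\mathrm{R}\Gamma_I(M)$; more to the point, since $\Supp_R N \subseteq V(I)$ we have $\Hom_R(N, E^{\cdot}_R(M)) = \Hom_R(N, \Gamma_I(E^{\cdot}_R(M)))$, and $E^{\cdot}_R(M)$ is an injective resolution of $M$, so $\mathrm{R}\Hom_R(N, \Gamma_I(E^{\cdot}_R(M))) \cong \mathrm{R}\Hom_R(N, M)$, whose cohomology in degree $i$ is $\Ext^i_R(N, M)$. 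On the other side, the shift gives $H^i\big(\mathrm{R}\Hom_R(N, H^c_I(M)[-c])\big) \cong \Ext^{i-c}_R(N, H^c_I(M))$. Comparing cohomology in degree $i$ produces the desired natural isomorphism $\Ext^i_R(N, M) \cong \Ext^{i-c}_R(N, H^c_I(M))$. The naturality is automatic since every arrow used is a morphism of complexes or the canonical identification of injective resolutions.

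Alternatively, and perhaps more in the spirit of the earlier results, one can bootstrap from Proposition \ref{31}(a). For $i < c$ both sides vanish (the left by Proposition \ref{31}(a), the right since $H^c_I(M)$ has no negative Ext in low degrees for trivial reasons), and for $i = c$ Proposition \ref{31}(a) already gives $\Ext^c_R(N,M) \cong \Hom_R(N, H^c_I(M))$. For $i > c$ one would dimension-shift: pick a short exact sequence $0 \to M \to E \to M' \to 0$ with $E$ injective, observe that $\Gamma_I(E)$ still has no cohomology below degree $c$ and that $H^{c-1}_I(M') \cong H^c_I(M)$ while $H^j_I(M') \cong H^{j+1}_I(M) = 0$ for $j \geq c$, so $c - 1 = \grade(I, M')$ and the inductive hypothesis applies to $M'$; chasing the long exact Ext sequences then advances the isomorphism one degree at a time. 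I would present the first (derived-category) argument as the main proof since it is cleaner, and the induction is really just the unwound version of it.

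The step I expect to require the most care is the identification $\mathrm{R}\Hom_R(N, \Gamma_I(E^{\cdot}_R(M))) \cong \mathrm{R}\Hom_R(N, M)$: one must check that each $\Gamma_I(E^i_R(M))$ is still an injective $R$-module (this is Matlis theory — $\Gamma_I$ of an injective is injective, being a direct sum of those $E_R(R/\mathfrak{p})$ with $\mathfrak{p} \in V(I)$), so that $\Gamma_I(E^{\cdot}_R(M))$ is a bona fide complex of injectives, and that the inclusion $\Gamma_I(E^{\cdot}_R(M)) \hookrightarrow E^{\cdot}_R(M)$ becomes, after $\Hom_R(N, -)$, an isomorphism of complexes thanks to $\Supp_R N \subseteq V(I)$. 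Once this is in place the rest is formal.
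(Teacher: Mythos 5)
Your main argument is correct and is essentially the paper's own proof: the paper likewise observes that under the vanishing hypothesis $\Gamma_I(E^{\cdot}_R(M))$ is an injective resolution of $H^c_I(M)[-c]$ (which you justify via the exactness of the truncation complex) and then identifies $\Hom_R(N,\Gamma_I(E^{\cdot}_R(M)))$ with $\Hom_R(N,E^{\cdot}_R(M))$ using $\Supp_R N\subseteq V(I)$, exactly as in your final paragraph.
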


\begin{proof} Let $E_R^{\cdot}(M)$ denote a minimal injective resolution of $M$. By the assumption
it follows that $\Gamma_I(E^{\cdot}_R(M))$ is a minimal injective resolution of $H^c_I(M)[-c]$.
Therefore there is an isomorphism
\[
\Ext^{i-c}_R(N,H^c_I(M)) \cong H^i(\Hom_R(N,\Gamma_I(E^{\cdot}_R(M))))
\]
for all $i \in \mathbb{Z}$. By an application of \cite[Lemma 2.2]{p1} it follows that
\[
\Hom_R(N,\Gamma_I(E^{\cdot}_R(M))) \cong \Hom_R(N,E^{\cdot}_R(M))
\]
for an $R$-module $N$
such that $\Supp_R N \subseteq V(I)$. Because of
\[
\Ext^i_R(N,M) = H^i(\Hom_R(N, E^{\cdot}_R(M)))
\]
the result follows.
\end{proof}

\section{A natural map of endomorphism rings}
In this section $I$ denotes an ideal of a local ring $(R,\mathfrak{m})$. Let $M$ be an
$R$-module. We denote by $\hat{M}^I = \Lambda^I(M) = \varprojlim M/I^{\alpha}M$ the
$I$-adic completion of $M$. In particular $\hat{R}^I$ denotes the $I$-adic completion
of $R$. The natural homomorphism $R \to \hat{R}^I$ makes $\hat{R}^I$ into a faithfully flat
$R$-module. We denote by $\Lambda^I(\cdot)$ the completion functor. Its left
derived functors are denoted by ${\rm L}_i\Lambda^I(\cdot), i \in \mathbb{Z}$. Note that
${\rm L}_i\Lambda^I(M) = 0$ for all $i > 0$ and a finitely generated $R$-module $M$ as
follows since $\Lambda^I(\cdot)$ is exact on the category of finitely generated $R$-modules.

Before we shall prove the main theorem we need another result on completion. To this end
let $\underline{x} = x_1,\ldots,x_r\in I$ denote a system of elements such that $\Rad (\underline{x})R
= \Rad I$. Then we consider the \v{C}ech complex $\Cech$
of the system $\xx$ (see e.g. \cite[Section 3]{pS2}).
Moreover, there is a bounded complex
$L_{\xx}$ of free $R$-modules (depending on $\xx = x_1,\ldots,x_r$) and a
natural homomorphism of complexes $L_{\xx} \to \Cech$  that induces an isomorphism
in cohomology (see \cite[Section 4]{pS2} for the details). That is, $L_{\xx}$ provides
a free resolution of $\Cech$, a complex of flat $R$-modules.
In our context the importance of the \v{C}ech complex
is the following result. It allows the expression of the left derived
functors ${\rm L}_i\Lambda^I(\cdot)$ in different terms.

\begin{thm} \label{2.3} Let $I$ denote an ideal of
ring $(R,\mathfrak{m})$. Let $X^{\cdot}$ denote a complex of flat $R$-modules. Then there are natural isomorphisms
\[
{\rm L}_i\Lambda^I(X^{\cdot}) \cong H_i(\Hom_R(L_{\xx},X^{\cdot})) \cong
H_i(\Hom_R(\Cech,E^{\cdot}) )
\]
for all $i \in \mathbb{Z}$, where $E^{\cdot}$ denotes an injective resolution of $X^{\cdot}$.
\end{thm}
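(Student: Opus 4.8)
The plan is to establish the two isomorphisms separately, reducing everything to the homological interpretation of $\Lambda^I$ via the free resolution $L_{\xx}$ of the \v{C}ech complex $\Cech$. First I would record the key structural input already available: $L_{\xx}$ is a bounded-above complex of free $R$-modules mapping quasi-isomorphically to $\Cech$, and $\Cech$ is a bounded complex of flat $R$-modules. The standard fact (see \cite{pS2}) is that for a complex $X^{\cdot}$ of flat $R$-modules, the left-derived completion ${\rm L}\Lambda^I(X^{\cdot})$ is computed by $\Hom_R(L_{\xx}, X^{\cdot})$, because tensoring a flat complex with the free resolution $L_{\xx}$ of $\Cech$ realizes the derived tensor product with $\Cech$, and Greenlees--May duality identifies this with derived completion. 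Taking homology in degree $i$ then yields the first isomorphism ${\rm L}_i\Lambda^I(X^{\cdot}) \cong H_i(\Hom_R(L_{\xx},X^{\cdot}))$. I would either cite this directly from \cite[Section 4]{pS2} or sketch it: since each $X^j$ is flat, $\Hom_R(-,X^{\cdot})$ applied to the quasi-isomorphism $L_{\xx}\to\Cech$ behaves well, and the total complex computation goes through.

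For the second isomorphism, the idea is to pass to an injective resolution. Let $X^{\cdot} \qism E^{\cdot}$ with $E^{\cdot}$ a complex of injective $R$-modules. The natural map $\Hom_R(\Cech, X^{\cdot}) \to \Hom_R(\Cech, E^{\cdot})$ should be a quasi-isomorphism because $\Cech$ is a bounded complex of flat modules, hence $\Hom_R(\Cech,-)$ preserves quasi-isomorphisms between (suitably bounded) complexes — concretely, $\Cech$ admits the finite free resolution $L_{\xx}$, so $\Hom_R(\Cech, -)$ computes the same derived functor as $\Hom_R(L_{\xx},-)$ on both $X^{\cdot}$ and $E^{\cdot}$, and the comparison maps fit into a commutative square. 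Then I would compare $\Hom_R(L_{\xx}, E^{\cdot})$ with $\Hom_R(L_{\xx}, X^{\cdot})$: since the terms of $L_{\xx}$ are free (in particular projective), $\Hom_R(L_{\xx}, -)$ preserves the quasi-isomorphism $X^{\cdot}\qism E^{\cdot}$, giving $H_i(\Hom_R(L_{\xx},X^{\cdot})) \cong H_i(\Hom_R(L_{\xx},E^{\cdot}))$. Finally $\Hom_R(L_{\xx}, E^{\cdot}) \qism \Hom_R(\Cech, E^{\cdot})$ because the map $L_{\xx}\to\Cech$ is a quasi-isomorphism of bounded(-above) complexes and $E^{\cdot}$ is a complex of injectives, so $\Hom_R(-,E^{\cdot})$ takes it to a quasi-isomorphism. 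Chaining these identifications yields $H_i(\Hom_R(L_{\xx},X^{\cdot})) \cong H_i(\Hom_R(\Cech,E^{\cdot}))$.

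The main obstacle I anticipate is bookkeeping with boundedness and the direction of the resolutions: $L_{\xx}$ is a left (projective-type) resolution sitting in nonpositive cohomological degrees while $E^{\cdot}$ is a right (injective) resolution, so one must be careful that the Hom-complexes involved are the totalizations of genuinely bounded double complexes, ensuring the spectral-sequence or acyclic-assembly arguments converge and that "preserves quasi-isomorphisms" actually applies. A secondary technical point is verifying that $\Hom_R(\Cech, -)$ — with $\Cech$ only a complex of flat, not projective, modules — still computes the correct derived functor; this is exactly where one uses the existence of the finite free resolution $L_{\xx}$ rather than flatness alone. Once the bounded-complex hypotheses are in place, each individual step is a standard invocation of the fact that $\Hom$ out of a bounded complex of projectives, or into a bounded complex of injectives, preserves quasi-isomorphisms, so the argument is really an assembly of known lemmas from \cite{pS2} and classical homological algebra; I would present it concisely with appropriate citations rather than reproving homotopy-invariance from scratch.
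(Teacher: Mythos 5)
Your proposal is correct and follows essentially the same route as the paper, which disposes of this theorem by simply citing \cite{ALL}, \cite{psy} and \cite[Theorem 1.1]{pS2} --- precisely the sources you invoke for the key identification of ${\rm L}_i\Lambda^I(\cdot)$ with $H_i(\Hom_R(L_{\xx},\cdot))$ on complexes of flat modules. Your further unpacking of the second isomorphism, comparing $\Hom_R(L_{\xx},X^{\cdot})$, $\Hom_R(L_{\xx},E^{\cdot})$ and $\Hom_R(\Cech,E^{\cdot})$ via the quasi-isomorphisms $L_{\xx}\to\Cech$ and $X^{\cdot}\to E^{\cdot}$, is exactly the standard argument underlying those references, so no gap remains.
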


\begin{proof} See \cite{ALL}, \cite{psy} or \cite[Theorem 1.1]{pS2}.
\end{proof}

The last Theorem \ref{2.3} is one of the main results of \cite[Section 4]{pS2} in the case of a bounded complex. The general version was proved by  M.~Porta, L.~Shaul, A.~Yekutieli (see \cite{psy}).
In our formulation here we do not use the technique of derived functors and derived categories.
(For a more advanced exposition based on derived categories and derived functors the
interested reader might also consult \cite{ALL} and \cite{psy}.)

\begin{thm}\label{2}
Let $(R,\mathfrak m)$ denote a local ring with $\dim(R)=n$. Let $M$ be a finitely generated $R$-module. Let $I\subset R$ be an ideal and $c= \grade (I,M)$.  Then there is a natural homomorphism
\[
\Hom_{\hat{R}^I}(\hat{M}^I, \hat{M}^I)\to \Hom_{R}(H^c_{I}(M),H^c_{I}(M)).
\]
\end{thm}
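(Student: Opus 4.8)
The plan is to use the truncation complex $C^{\cdot}_M(I)$ of Definition \ref{2.2} together with the interpretation of the completion functors provided by Theorem \ref{2.3}. First I would apply $\Gamma_I(\cdot)$ to a minimal injective resolution $E^{\cdot}_R(M)$ of $M$; since $\Gamma_I(E^{\cdot}_R(M))$ is a complex of flat (indeed injective, hence $\Gamma_I$-acyclic) $R$-modules that computes $\mathrm{R}\Gamma_I(M)$, Theorem \ref{2.3} gives $\mathrm{L}_i\Lambda^I(\Gamma_I(E^{\cdot}_R(M))) \cong H_i(\Hom_R(\Cech, E^{\cdot}_R(M)))$. On the other hand, the complex of free modules $L_{\xx}$ resolving $\Cech$ lets us rewrite $\Hom_R(L_{\xx}, \Gamma_I(E^{\cdot}_R(M)))$, and a standard spectral sequence / direct computation identifies $H_0$ of the relevant Hom complex with $\Hom_{\hat R^I}(\hat M^I, \hat M^I)$, using that $\hat M^I = \Lambda^I(M)$ and that $\Lambda^I$ is exact on finitely generated modules so the higher $\mathrm{L}_i\Lambda^I(M)$ vanish.

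Next I would exploit the short exact sequence of complexes
\[
0 \to H^c_I(M)[-c] \to \Gamma_I(E^{\cdot}_R(M)) \to C^{\cdot}_M(I) \to 0
\]
from Definition \ref{2.2}. Applying $\Hom_R(\Cech, -)$ (or equivalently $\mathrm{L}_\bullet\Lambda^I$ via Theorem \ref{2.3}) yields a long exact sequence relating $\mathrm{L}_i\Lambda^I$ of the three complexes. The key point is that $H^j(C^{\cdot}_M(I))$ is concentrated in degrees $c < j \le n$, so the connecting maps allow one to relate the degree-$c$ cohomology of $\Lambda^I \Gamma_I(E^{\cdot}_R(M))$ to that of $H^c_I(M)[-c]$. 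Concretely, one obtains a natural homomorphism from $\Hom_{\hat R^I}(\hat M^I,\hat M^I)$, realized as $H^0$ of $\Hom_R(\Cech, E^{\cdot}_R(M))$ viewed appropriately, into $\Hom_R(H^c_I(M), H^c_I(M))$, the latter being computed via the minimal injective resolution $\Gamma_I(E^{\cdot}_R(M))$ of $H^c_I(M)[-c]$.

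Alternatively, and perhaps more cleanly, I would package this using the isomorphisms $\Hom_R(H^c_I(M), \Gamma_I(E^{\cdot}_R(M))) \simeq \Hom_R(H^c_I(M), E^{\cdot}_R(M))$ (an instance of \cite[Lemma 2.2]{p1}, since $\Supp_R H^c_I(M) \subseteq V(I)$) whose $H^0$ is $\Ext^0$-type data, and compare with $\Hom_R(\Lambda^I M, -)$-computations. Since $M$ is finitely generated, $\hat M^I \cong \hat R^I \otimes_R M$, and a chain of canonical maps — from $\Hom_{\hat R^I}(\hat M^I, \hat M^I)$ to $\Hom_R(M, \hat M^I)$ to maps induced on $H^c_I(-)$ using $H^c_I(\hat M^I) \cong H^c_I(M)$ (flat base change, as $\hat R^I$ is faithfully flat and $I\hat R^I$-adic completion) — produces the desired natural homomorphism. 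I would then check this is independent of the chosen resolution and functorial in the evident sense.

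The main obstacle I anticipate is the bookkeeping needed to show that the map genuinely lands in $\Hom_R(H^c_I(M), H^c_I(M))$ and is canonical: one must verify that passing through the truncation complex (whose cohomology in degrees $>c$ could be nonzero, so $\Gamma_I(E^{\cdot}_R(M))$ is not in general a resolution of $H^c_I(M)[-c]$) still yields a well-defined degree-zero map rather than merely a map to some subquotient. Handling the higher $H^i_I(M)$ via the connecting homomorphisms — and confirming they do not obstruct the construction of the natural transformation at spot $c$ — is the delicate technical step; the isomorphism claims of Theorem \ref{1.1} in the vanishing case will then follow because under the hypothesis $H^i_I(M) = 0$ for $i \ne c$ the truncation complex is acyclic and $\Gamma_I(E^{\cdot}_R(M))$ becomes an honest minimal injective resolution of $H^c_I(M)[-c]$, as already exploited in Proposition \ref{ps2}.
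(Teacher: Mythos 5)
Your primary route through the truncation complex applies the wrong functor to the short exact sequence $0 \to H^c_I(M)[-c] \to \Gamma_I(E^{\cdot}_R(M)) \to C^{\cdot}_M(I) \to 0$. Applying $\Hom_R(\Cech,-)$, i.e.\ the derived completion ${\rm L}_{\bullet}\Lambda^I$, to the middle term gives ${\rm L}_0\Lambda^I(M)\cong \hat{M}^I$ in degree zero (since ${\rm L}\Lambda^I\circ {\rm R}\Gamma_I\simeq {\rm L}\Lambda^I$ and the higher derived completions of a finitely generated module vanish), \emph{not} $\Hom_{\hat{R}^I}(\hat{M}^I,\hat{M}^I)$; likewise ${\rm L}_{\bullet}\Lambda^I(H^c_I(M)[-c])$ bears no relation to $\Hom_R(H^c_I(M),H^c_I(M))$. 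The step you are missing is to dualize the truncation sequence into the injective resolution of $M$: apply $\Hom_R(\cdot,E^{\cdot}_R(M))$, which yields a short exact sequence of complexes because $E^{\cdot}_R(M)$ is a left-bounded complex of injectives. Its right-hand term is $\Hom_R(H^c_I(M),E^{\cdot}_R(M))[c]$, whose degree-zero cohomology is $\Ext^c_R(H^c_I(M),M)\cong \Hom_R(H^c_I(M),H^c_I(M))$ by Proposition \ref{31}(a) --- this identification, which you never invoke, is what makes the target appear without pretending that $\Gamma_I(E^{\cdot}_R(M))$ resolves $H^c_I(M)[-c]$ (it does not in general, as you yourself concede in your last paragraph). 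The middle term $\Hom_R(\Gamma_I(E^{\cdot}_R(M)),E^{\cdot}_R(M))$ is quasi-isomorphic, via $\Gamma_I(E^{\cdot}_R(M))\qism L_{\xx}\otimes E^{\cdot}_R(M)$ and adjunction, to $\Hom_R(L_{\xx},\Hom_R(M,E^{\cdot}_R(M)))$, and only at this point does Theorem \ref{2.3} enter: the degenerate spectral sequence gives $\Lambda^I(\Ext^j_R(M,M))$ in degree $j$, hence $\Hom_{\hat{R}^I}(\hat{M}^I,\hat{M}^I)$ in degree zero. The long exact cohomology sequence at degree $0$ then supplies the map. Completion applied to the torsion side alone cannot produce an endomorphism ring.

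Your third paragraph, by contrast, does contain a correct and more elementary argument for the bare existence statement: since $\hat{M}^I\cong M\otimes_R\hat{R}^I$ and $H^c_I(M)\otimes_R\hat{R}^I\cong H^c_I(M)$ (the module $H^c_I(M)$ being $I$-torsion), every $\hat{R}^I$-endomorphism of $\hat{M}^I$ induces, by functoriality of $H^c_I(\cdot)$ and flat base change, an endomorphism of $H^c_I(M)$. That is a legitimate proof of Theorem \ref{2} as stated and genuinely different from the paper's. What it buys is brevity; what it loses is the long exact sequence and the identification $H^j\cong \Lambda^I(\Ext^j_R(M,M))$ of the middle complex, which are exactly what Corollary \ref{3a} uses to upgrade the map to an isomorphism and to obtain the $\Ext$ isomorphisms when $H^i_I(M)=0$ for $i\neq c$. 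Taking the shortcut, you would still have to carry out the truncation-complex computation for the corollary, and additionally check that the two constructions give the same map.
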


\begin{proof}
Let $E^{\cdot}_R(M)$ be a minimal injective resolution of $M$.  Then we use the truncation
complex as defined in \ref{2.2}. We apply the functor $\Hom_R(\cdot, E^{\cdot}_R(M))$ to this
short exact sequence of complexes. Since $E^{\cdot}_R(M)$ is a left bounded complex of
injective $R$-modules it yields  a short exact sequence of complexes of $R$-modules
\begin{gather*}
0\rightarrow \Hom_R(C^{\cdot}_M(I), E^{\cdot}_R(M))\to \Hom_R(\Gamma_I(E^{\cdot}_R(M)),
E^{\cdot}_R(M))\\
\to \Hom_R(H^c_I(M), E^{\cdot}_R(M))[c]\to 0.
\end{gather*}
First we investigate the complex in the middle.  There is a quasi-isomorphism of complexes
$\Gamma_I(E^{\cdot}_R(M)) \qism L_{\xx} \otimes E^{\cdot}_R(M)$ (see \cite[Theorem 1.1]{pS2}). That is the complex in the
middle is quasi-isomorphic to $X = \Hom_R(L_{\xx} \otimes E^{\cdot}_R(M),E^{\cdot}_R(M))$ since
$ E^{\cdot}_R(M)$ is a left bounded complex of injective $R$-modules. By adjunction (see Lemma \ref{4.5}) $X$ is
isomorphic to the complex $\Hom_R(L_{\xx}, \Hom_R( E^{\cdot}_R(M), E^{\cdot}_R(M)))$. Because
$ E^{\cdot}_R(M)$ is an injective resolution of $M$ so there is a quasi-isomorphism of complexes
\[
\Hom_R( E^{\cdot}_R(M), E^{\cdot}_R(M)) \qism \Hom_R(M, E^{\cdot}_R(M))
\]
Since $L_{\xx}$ is a bounded complex of free $R$-modules it follows that
$X$ is quasi-isomorphic to $\Hom_R(L_{\xx}, \Hom_R(M, E^{\cdot}_R(M)))$.
That is, in order to compute the homology of $X$ it will be enough to compute the homology
of $\Hom_R(L_{\xx}, \Hom_R(M, E^{\cdot}_R(M)))$. By virtue of Theorem \ref{2.3}
there is the following spectral sequence
\[
E_2^{i,j} = {\rm L}_{i}\Lambda^I(\Ext^j_R(M,M)) \Rightarrow
E^{i+j}_{\infty} = H^{i+j}(\Hom_R(L_{\xx},\Hom_R(M, E^{\cdot}_R(M))))
\]
It degenerates to isomorphisms ${\rm L}_{0}\Lambda^I(\Ext_R^j(M,M)) \cong H^j(\Hom_R(L_{\xx},\Hom_R(M, E^{\cdot}_R(M))))$ since $\Ext^j(M,M)$ is finitely generated for all $j \in \mathbb{Z}$.
This finally proves that $H^j(X) \cong {\rm L}_{0}\Lambda^I(\Ext_R^j(M,M))$ for all $j \in \mathbb{Z}$.

The cohomology of the complex at the right of the above short exact sequence of complexes
is $\Ext_R^{i+c}(H^c_I(M),M), i \in \mathbb{Z}$.
Therefore the long exact cohomology sequence at degree 0 provides a natural homomorphism
\[
\Hom_{\hat{R}^I}(\hat{M}^I,\hat{M}^I) \to \Ext_R^{c}(H^c_I(M),M).
\]
The second module is isomorphic to $\Hom_R(H^c_I(M), H^c_I(M))$ (see Proposition \ref{31} (a)).
This completes the proof.
\end{proof}

In the following we shall investigate the particular case when $H^i_I(M) = 0$ for
all $ i \not= c = \grade(I,M)$.

\begin{cor}\label{3a}
With the previous notation suppose in addition that $H^i_{I}(M)= 0$ for all $i\neq c$.
Then there are isomorphisms
\[
\Hom_{\hat{R}^I}(\hat{M}^I,\hat{M}^I) \cong \Hom_R(H^c_I(M), H^c_I(M))
\]
and
\[
\Ext^i_{\hat{R}^I}(\hat{M}^I,\hat{M}^I) \cong \Ext^{i+c}_R(H^c_I(M), M)\cong \Ext^{i}_R(H^c_I(M), H^c_I(M))
\]
for all $i \in \mathbb{Z}$.
\end{cor}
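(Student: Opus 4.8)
The plan is to reuse the short exact sequence of complexes produced in the proof of Theorem \ref{2} and to use the extra hypothesis to annihilate one of its three terms. Recall from the remark following Definition \ref{2.2} that $H^i(C^{\cdot}_M(I)) \cong H^i_I(M)$ for $c < i \le n$ and vanishes otherwise; hence under the assumption $H^i_I(M) = 0$ for all $i \ne c$ the truncation complex $C^{\cdot}_M(I)$ is acyclic. Since $E^{\cdot}_R(M)$ is a bounded-below complex of injective $R$-modules, the functor $\Hom_R(-, E^{\cdot}_R(M))$ sends the quasi-isomorphism $0 \to C^{\cdot}_M(I)$ to a quasi-isomorphism, so the complex $\Hom_R(C^{\cdot}_M(I), E^{\cdot}_R(M))$ is acyclic as well.

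First I would feed this vanishing into the short exact sequence
\[
0 \to \Hom_R(C^{\cdot}_M(I), E^{\cdot}_R(M)) \to \Hom_R(\Gamma_I(E^{\cdot}_R(M)), E^{\cdot}_R(M)) \to \Hom_R(H^c_I(M), E^{\cdot}_R(M))[c] \to 0
\]
from the proof of Theorem \ref{2}. Because the left-hand complex is acyclic, the associated long exact cohomology sequence degenerates to isomorphisms
\[
H^i\bigl(\Hom_R(\Gamma_I(E^{\cdot}_R(M)), E^{\cdot}_R(M))\bigr) \cong H^{i}\bigl(\Hom_R(H^c_I(M), E^{\cdot}_R(M))[c]\bigr) = \Ext^{i+c}_R(H^c_I(M), M)
\]
for every $i \in \mathbb{Z}$. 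On the other hand, the computation already carried out in the proof of Theorem \ref{2} identifies the cohomology of the middle complex with ${\rm L}_0\lam(\Ext^i_R(M,M))$. Since $\Ext^i_R(M,M)$ is a finitely generated $R$-module, ${\rm L}_0\lam$ here is just ordinary $I$-adic completion, and flat base change along $R \to \hat{R}^I$ (legitimate because $M$ is finitely generated, so $\hat{M}^I \cong M \otimes_R \hat{R}^I$) gives ${\rm L}_0\lam(\Ext^i_R(M,M)) \cong \Ext^i_R(M,M)\otimes_R \hat{R}^I \cong \Ext^i_{\hat{R}^I}(\hat{M}^I, \hat{M}^I)$. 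Combining the two computations yields $\Ext^i_{\hat{R}^I}(\hat{M}^I, \hat{M}^I) \cong \Ext^{i+c}_R(H^c_I(M), M)$ for all $i$.

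It then remains to pass from $\Ext^{i+c}_R(H^c_I(M), M)$ to $\Ext^i_R(H^c_I(M), H^c_I(M))$, for which I would invoke Proposition \ref{ps2} with $N = H^c_I(M)$: since $\Supp_R H^c_I(M) \subseteq V(I)$, that proposition gives $\Ext^{j}_R(H^c_I(M), M) \cong \Ext^{j-c}_R(H^c_I(M), H^c_I(M))$ for all $j$, and taking $j = i+c$ completes the chain of isomorphisms. For $i = 0$ this reads $\Hom_{\hat{R}^I}(\hat{M}^I, \hat{M}^I) \cong \Hom_R(H^c_I(M), H^c_I(M))$, and one should check that the isomorphism obtained is precisely the natural homomorphism of Theorem \ref{2}; this is automatic, since that homomorphism was defined as the degree-$0$ connecting map of the very same long exact sequence composed with the identification $\Ext^c_R(H^c_I(M),M)\cong\Hom_R(H^c_I(M),H^c_I(M))$ of Proposition \ref{31}(a). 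The only genuinely delicate step is the one handled in the first paragraph: ensuring that the acyclicity of the (a priori unbounded) complex $C^{\cdot}_M(I)$ propagates to $\Hom_R(C^{\cdot}_M(I), E^{\cdot}_R(M))$, which rests on $E^{\cdot}_R(M)$ being a bounded-below complex of injectives so that $\Hom_R(-,E^{\cdot}_R(M))$ preserves quasi-isomorphisms.
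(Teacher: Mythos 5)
Your proposal is correct and follows essentially the same route as the paper's proof: kill the truncation-complex term of the short exact sequence from Theorem \ref{2}, read off $\Ext^i_{\hat{R}^I}(\hat{M}^I,\hat{M}^I)\cong\Ext^{i+c}_R(H^c_I(M),M)$ from the cohomology computation already done there, and convert to $\Ext^i_R(H^c_I(M),H^c_I(M))$ via Propositions \ref{31}(a) and \ref{ps2}. You are in fact slightly more careful than the paper on two points it leaves implicit --- that $\Hom_R(-,E^{\cdot}_R(M))$ preserves acyclicity because $E^{\cdot}_R(M)$ is a left-bounded complex of injectives, and the flat base-change identification ${\rm L}_0\lam(\Ext^i_R(M,M))\cong\Ext^i_{\hat{R}^I}(\hat{M}^I,\hat{M}^I)$.
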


\begin{proof}  The assumption on the vanishing of $H^i_I(M)$ for all $i \not= c$ implies that
the truncation complex $C^{\cdot}_M(I)$ is homologically trivial. Therefore
$H^i(\Hom_R(C^{\cdot}_M(I), E^{\cdot}_R(M))) = 0$ for all $i \in \mathbb{Z}$.
That is,  the short exact sequence of complexes in the proof of Theorem
\ref{2} induces a quasi-isomorphism
\[
\Hom_R(\Gamma_I(E^{\cdot}_R(M)),E^{\cdot}_R(M))  \qism \Hom_R(H^c_I(M), E^{\cdot}_R(M))[c].
\]
With the computations on the cohomology (done in the proof of Theorem \ref{2}) it provides isomorphisms
\[
\Ext^i_{\hat{R}^I}(\hat{M}^I,\hat{M}^I) \cong \Ext^{i+c}_R(H^c_I(M), M)
\]
for all $i \in \mathbb{Z}$. This finishes the proof (see Proposition \ref{31}$(a)$ and Proposition \ref{ps2}).
\end{proof}

\begin{rem}\label{r1} For an arbitrary $R$-module $X$ there is a natural injective homomorphism
$X \to D(D(X))$. By applying $\Hom_R(X,\cdot)$ and by adjunction (see Lemma \ref{4.5}) there is a natural
injective homomorphism $\Hom_{R}(X, X)\to \Hom_{R}(D(X),D(X))$. Therefore by Theorem \ref{2} the natural
homomorphism $\Hom_{\hat{R}^I}(\hat{M}^I, \hat{M}^I) \to \Hom_{R}(D(H^c_{I}(M)),D(H^c_{I}(M)))$
induces a commutative diagram
\[
\begin{array}{ccc}
  \Hom_{\hat{R}^I}(\hat{M}^I, \hat{M}^I) &  \to & \Hom_{R}(H^c_{I}(M),H^c_{I}(M)) \\
  \parallel &   & \downarrow \\
   \Hom_{\hat{R}^I}(\hat{M}^I, \hat{M}^I) & \to & \Hom_{R}(D(H^c_{I}(M)),D(H^c_{I}(M)))
\end{array}
\]
\end{rem}

The following lemma is a generalization of \cite[Corollary 2.4 and Theorem 2.6]{h1} to a finitely generated module.

\begin{lem}\label{3}
Let $R$ be a complete local ring of dimension $n$ and let $I\subset R$ be an ideal. Let $M$ denote a finitely generated
$R$-module and $\grade(I,M)= c$. Then the following holds:
\begin{itemize}
  \item [(a)] There is an isomorphism
\[
\Hom_{R}(H^c_{I}(M),H^c_{I}(M))\cong \Hom_{R}(D(H^c_{I}(M)),D(H^c_{I}(M))).
\]

 \item [(b)] The natural homomorphism
\[
\Hom_{R}(M,M)\to \Hom_{R}(H^c_{I}(M),H^c_{I}(M))
\]
is an isomorphism if and only if the natural homomorphism
\[
\Hom_{R}(M,M)\to \Hom_{R}(D(H^c_{I}(M)),D(H^c_{I}(M)))
\]
is an isomorphism.
\end{itemize}
\end{lem}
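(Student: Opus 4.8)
\textbf{Proof proposal for Lemma \ref{3}.}

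The plan is to deduce both parts from the Matlis duality machinery already set up, together with the fact that $R$ is complete so that $D(D(-))$ is the identity on finitely generated modules and, more to the point, $D(H^c_I(M))$ is a finitely generated $\hat R^I$-module (it is $I$-adically complete, being a Matlis dual, and it is finitely generated over $R$ when $H^c_I(M)$ is Artinian-like; more robustly one uses that $D(H^c_I(M))\cong \Tor$- or $\Ext$-modules via Proposition \ref{31} and Lemma \ref{4.5}, which are finitely generated over $\hat R^I$). First I would prove (a). Apply the Matlis dual functor $D(-)$ to a homomorphism $H^c_I(M)\to H^c_I(M)$; since $R$ is complete, $D(-)$ is a faithful exact functor and induces an injective map $\Hom_R(H^c_I(M),H^c_I(M))\to \Hom_R(D(H^c_I(M)),D(H^c_I(M)))$ as recorded in Remark \ref{r1}. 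For surjectivity, I would argue that because $R$ is complete and $H^c_I(M)$ is the Matlis dual of a finitely generated $\hat R^I$-module $N:=D(H^c_I(M))$ (so $H^c_I(M)\cong D(N)$ by Matlis biduality), every $R$-linear endomorphism of $N$ dualizes back to one of $D(N)=H^c_I(M)$; concretely, $\Hom_R(D(N),D(N))\cong D(N\otimes_R D(N))\cong \Hom_R(N,N)$ by Lemma \ref{4.5}(1) applied with the roles arranged so that $N$ is finitely generated, and the same two isomorphisms identify $\Hom_R(H^c_I(M),H^c_I(M))$ with $\Hom_R(N,N)$. Composing gives the desired isomorphism, and one checks the composite is the natural map from Remark \ref{r1}.

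Next I would prove (b). Consider the commutative square from Remark \ref{r1}:
\[
\begin{array}{ccc}
\Hom_R(M,M) & \to & \Hom_R(H^c_I(M),H^c_I(M))\\
\parallel & & \downarrow\\
\Hom_R(M,M) & \to & \Hom_R(D(H^c_I(M)),D(H^c_I(M)))
\end{array}
\]
where the left vertical arrow is the identity and the right vertical arrow is the isomorphism just established in part (a). (Strictly, Remark \ref{r1} is stated with $\hat M^I$ in the upper-left corner; but since $R$ is complete, $\hat M^I = M$, so the diagram applies verbatim with $\Hom_R(M,M)$, and the top arrow is the natural map $\Hom_R(M,M)\to\Hom_R(H^c_I(M),H^c_I(M))$ obtained by composing $M\to\hat M^I$ with the map of Theorem \ref{2}.) Since the right vertical map is an isomorphism, the top horizontal map is an isomorphism if and only if the bottom one is. This is exactly the equivalence asserted in (b).

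The main obstacle, as I see it, is the bookkeeping needed to make part (a) rigorous: one must be careful about which module in each $\Hom$ or $\Tor$ is finitely generated, since Lemma \ref{4.5}(2) requires finite generation of the first argument, whereas $H^c_I(M)$ itself is typically not finitely generated. The clean way around this is to work throughout on the dual side, writing $N = D(H^c_I(M))$, noting $N$ is finitely generated over $\hat R^I = R$ (using completeness and, e.g., Proposition \ref{31}(b) together with finite generation of $\Tor^R_c(R/I, D(M))$ over $\hat R^I$, or the identification $N\cong \Ext^c_R(\text{something finitely generated}, \text{-})$), and then invoking Matlis biduality $H^c_I(M)\cong D(N)$ valid over the complete ring $R$. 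Once that finiteness is in hand, both $\Hom$-rings become $\Hom_R(N,N)$ via a pair of canonical isomorphisms from Lemma \ref{4.5}, and naturality of those isomorphisms forces compatibility with the maps from Remark \ref{r1}, closing both (a) and (b).
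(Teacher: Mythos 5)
Your treatment of part (b) is exactly the paper's: combine part (a) with the commutative diagram of Remark \ref{r1}. The problem is in part (a), where your argument rests on the claim that $N := D(H^c_I(M))$ is a finitely generated $R$-module, so that Matlis biduality $H^c_I(M)\cong D(N)\cong DD(H^c_I(M))$ and Lemma \ref{4.5}(2) can be applied with $N$ in the finitely generated slot. That claim is false in general: over a complete local ring, $D(X)$ is finitely generated only if $X$ embeds into the Artinian module $DD(X)$, i.e.\ only if $X$ is Artinian, and $H^c_I(M)$ is typically not Artinian when $I$ is not $\mathfrak m$-primary. For instance, with $R=k[[x,y]]$, $I=xR$, $M=R$, one has $c=1$ and $H^1_I(R)\cong R_x/R$, whose support is all of $V(xR)$; it is neither Artinian nor Noetherian, $D(H^1_I(R))$ is not finitely generated, and $H^1_I(R)$ is not Matlis reflexive. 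Your fallback remark that ``more robustly'' $D(H^c_I(M))$ is a finitely generated $\Tor$- or $\Ext$-module is not supported by Proposition \ref{31}: part (b) of that proposition identifies $N\otimes_R D(H^c_I(M))$ with $\Tor_c^R(N,D(M))$ for modules $N$ supported in $V(I)$; it says nothing about $D(H^c_I(M))$ itself.

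The finiteness has to be placed on $M$, not on $D(H^c_I(M))$, and this is how the paper argues. Write $X=H^c_I(M)$. Adjunction gives $\Hom_R(D(X),D(X))\cong D(X\otimes_R D(X))$ with no finiteness hypothesis. Proposition \ref{31}(b), applied with $N=X$ (legitimate because $\Supp_R X\subseteq V(I)$; no finite generation of $N$ is required there), gives $X\otimes_R D(X)\cong \Tor_c^R(X,D(M))$. Lemma \ref{4.5}(1) gives $D(\Tor_c^R(X,D(M)))\cong \Ext^c_R(X,DD(M))$, and only at this point does one use that $R$ is complete and $M$ is finitely generated, in order to replace $DD(M)$ by $M$. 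Finally Proposition \ref{31}(a) with $N=X$ gives $\Ext^c_R(X,M)\cong \Hom_R(X,X)$. Chaining these isomorphisms proves (a) without ever needing $D(X)$ to be finitely generated or $X$ to be reflexive. If you rework part (a) along these lines, your part (b) goes through as written.
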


\begin{proof} First of all recall the well known fact that $R$ is $I$-adic
complete provided it is complete  (i.e. complete in the $\mathfrak{m}$-adic
topology).
By adjunction there is an isomorphism
\[
\Hom_{R}(D(H^c_{I}(M)),D(H^c_{I}(M)))\cong D(H^c_{I}(M)\otimes_R D(H^c_{I}(M))).
\]
The module on the right side is isomorphic to
\[
D(\Tor_c^R(H^c_{I}(M),D(M)))\cong \Ext^c_R(H^c_{I}(M), M).
\]
This follows by view of Proposition \ref{31} because  $R$ is complete and by Matlis duality the double Matlis dual of $M$ is isomorphic to $M$ since $M$ is finitely generated.
By virtue of Proposition \ref{31} there is the following isomorphism
\[
\Ext^c_R(H^c_{I}(M), M)\cong \Hom_{R}(H^c_{I}(M),H^c_{I}(M)).
\]
By combining these isomorphisms together it proves $(a)$. The statement $(b)$ is clear from $(a)$
and the commutative diagram in Remark \ref{r1}.
\end{proof}

\section{Applications}
As before let $(R,\mathfrak{m})$ be a ring and let $M$ denote a finitely generated $R$-module.
The following Theorem is actually proved by Schenzel (see \cite[Theorem 1.2]{p7}) in case of a complete Gorenstein local ring. Here we will give a generalization to a finitely generated $R$-module $M$.

\begin{thm}\label{21.3}
Let $R$ be a local ring of dimension $n$ and let $M$ be a finitely generated $R$-module. Let $J\subset I$ denote two ideals of $R$ such that $\grade(I,M)=c= \grade(J,M)$.
\begin{itemize}
  \item [(a)] There is a natural homomorphism
\[
\Hom_{R}(H^c_{J}(M),H^c_{J}(M))\to \Hom_{R}(H^c_{I}(M),H^c_{I}(M))
\]
\item [(b)] Suppose that $\Rad IR_{\mathfrak{p}}= \Rad JR_{\mathfrak{p}}$ for all ${\mathfrak{p}}\in V(J)\cap \Supp_R(M)$ such that $\depth_{R_\mathfrak{p}} (M_{\mathfrak{p}})\leq c$. Then the above natural homomorphism is an isomorphism.
\item [(c)] Suppose in addition that $R$ is complete. Then there is a natural homomorphism
\[
\Hom_{R}(D(H^c_{J}(M)),D(H^c_{J}(M)))\to \Hom_{R}(D(H^c_{I}(M)),D(H^c_{I}(M)))
\]
and under the assumption of $(b)$ it is an isomorphism.
\end{itemize}
\end{thm}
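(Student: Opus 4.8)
The plan is to reduce everything to the absolute case $J \subset I$ with appropriate localizations, following the strategy of Schenzel's proof of \cite[Theorem 1.2]{p7} but carrying the module $M$ through all the arguments. For part $(a)$, I would first observe that since $J \subset I$, there is a natural morphism $\Gamma_I(E^{\cdot}_R(M)) \to \Gamma_J(E^{\cdot}_R(M))$ of subcomplexes of a minimal injective resolution $E^{\cdot}_R(M)$. Because $\grade(J,M) = \grade(I,M) = c$, both truncation complexes $C^{\cdot}_M(I)$ and $C^{\cdot}_M(J)$ start in cohomological degree $> c$, and the inclusion induces a map of the short exact sequences from Definition \ref{2.2}. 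Applying $\Hom_R(-, E^{\cdot}_R(M))$ and tracking the induced maps on cohomology in degree $0$ — exactly as in the proof of Theorem \ref{2}, where the middle term computes $\Hom_{\hat{R}^I}(\hat{M}^I,\hat{M}^I)$ via Theorem \ref{2.3} — yields a commutative square relating $\Ext^c_R(H^c_J(M),M)$ and $\Ext^c_R(H^c_I(M),M)$, hence by Proposition \ref{31}$(a)$ the desired map $\Hom_R(H^c_J(M),H^c_J(M)) \to \Hom_R(H^c_I(M),H^c_I(M))$. Naturality is built into the construction.

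For part $(b)$, the key point is a local criterion: the hypothesis $\Rad IR_{\mathfrak p} = \Rad JR_{\mathfrak p}$ for all ${\mathfrak p} \in V(J) \cap \Supp_R(M)$ with $\depth_{R_{\mathfrak p}}(M_{\mathfrak p}) \le c$ should force the cone of $C^{\cdot}_M(J) \to C^{\cdot}_M(I)$ (equivalently, the relative truncation complex) to be homologically trivial, so that the comparison map on the $\Hom(-,E^{\cdot}_R(M))$-level is a quasi-isomorphism. I would argue this by localizing: for ${\mathfrak p} \in \Supp_R M$, if $\depth_{R_{\mathfrak p}}(M_{\mathfrak p}) > c$ then the local cohomology modules $H^i_{IR_{\mathfrak p}}(M_{\mathfrak p})$ and $H^i_{JR_{\mathfrak p}}(M_{\mathfrak p})$ vanish for $i \le c$ anyway by the grade bound, while if $\depth_{R_{\mathfrak p}}(M_{\mathfrak p}) \le c$ the radical hypothesis gives $H^i_{IR_{\mathfrak p}}(M_{\mathfrak p}) \cong H^i_{JR_{\mathfrak p}}(M_{\mathfrak p})$ for all $i$; and if ${\mathfrak p} \notin V(J)$ then $J$ (hence also $I$) localizes to the unit ideal, trivially matching. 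This shows the two $\Gamma$-functors agree after localizing at every relevant prime, hence the relative truncation complex has zero local cohomology everywhere, hence is homologically trivial; the $\Hom_R(-, E^{\cdot}_R(M))$ then preserves the quasi-isomorphism and part $(b)$ follows.

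For part $(c)$, under the completeness hypothesis I would invoke Lemma \ref{3}$(a)$ twice, once for $J$ and once for $I$, to get canonical isomorphisms $\Hom_R(H^c_J(M),H^c_J(M)) \cong \Hom_R(D(H^c_J(M)),D(H^c_J(M)))$ and likewise for $I$. Then the map of part $(a)$ transports across these isomorphisms to give the natural homomorphism between the double-dual endomorphism rings; one checks compatibility with the maps $\Hom_R(M,M) \to \Hom_R(D(H^c_{(-)}(M)),D(H^c_{(-)}(M)))$ from Remark \ref{r1} to see it is indeed the natural one. The isomorphism statement under the hypothesis of $(b)$ is then immediate. I would alternatively present part $(c)$ via the Matlis dual directly: by Proposition \ref{31}$(b)$, $D(H^c_I(M)) \cong \Tor^R_c(R/I, D(M))/(\text{lower terms})$-type description, and the inclusion $J \subset I$ induces a map $R/I \otimes D(M) \to R/J \otimes D(M)$ which, after the degree-$c$ $\Tor$ identification, gives the needed map; but routing through Lemma \ref{3} is cleaner and avoids re-deriving dualities.

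The main obstacle I anticipate is part $(b)$: showing that the radical condition on the specified set of primes is exactly enough to kill the relative truncation complex. The subtlety is the \emph{depth condition} $\depth_{R_{\mathfrak p}}(M_{\mathfrak p}) \le c$ restricting which primes need the radical hypothesis — one must argue carefully that at primes failing this condition the relevant local cohomology of $M_{\mathfrak p}$ in degrees $\le$ the dimension of the support automatically vanishes or matches, using that $H^i_{\mathfrak a}(M_{\mathfrak p}) = 0$ for $i < \depth_{R_{\mathfrak p}}(M_{\mathfrak p})$ together with the comparison $H^i(C^{\cdot}_M(I))_{\mathfrak p}$ depends only on $I R_{\mathfrak p}$ up to radical. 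Getting the bookkeeping of cohomological degrees right — that $C^{\cdot}_M(I)$ and $C^{\cdot}_M(J)$ are supported in degrees $(c, n]$ and that the comparison is an isomorphism there — is where the care is needed; the rest is a faithful translation of the $M = R$ argument of \cite[Theorem 1.2]{p7}.
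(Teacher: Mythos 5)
Your parts (a) and (c) are essentially fine: the inclusion $\Gamma_I(E^{\cdot}_R(M)) \subseteq \Gamma_J(E^{\cdot}_R(M))$ produces the same natural map $H^c_I(M)\to H^c_J(M)$ that the paper obtains from the direct system of sequences $0\to I^{\alpha}/J^{\alpha}\to R/J^{\alpha}\to R/I^{\alpha}\to 0$, and applying $\Ext^c_R(-,M)$ together with Proposition \ref{31}(a) gives (a); part (c) is handled through Lemma \ref{3} exactly as in the paper. The genuine gap is in part (b).

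In your case analysis you assert that if $\depth_{R_{\mathfrak p}}(M_{\mathfrak p})>c$ then $H^i_{IR_{\mathfrak p}}(M_{\mathfrak p})$ and $H^i_{JR_{\mathfrak p}}(M_{\mathfrak p})$ vanish for $i\le c$ ``by the grade bound,'' and later you invoke ``$H^i_{\mathfrak a}(M_{\mathfrak p})=0$ for $i<\depth_{R_{\mathfrak p}}(M_{\mathfrak p})$.'' That inequality goes the wrong way: for an ideal $\mathfrak a$ of $R_{\mathfrak p}$ one has $\grade(\mathfrak a,M_{\mathfrak p})\le \depth M_{\mathfrak p}$, not $\ge$, and the vanishing range of $H^i_{\mathfrak a}(M_{\mathfrak p})$ is controlled by $\grade(\mathfrak a,M_{\mathfrak p})=\inf\{\depth M_{\mathfrak q}:\mathfrak q\in V(\mathfrak a)\cap\Supp M_{\mathfrak p}\}$, which can equal $c$ because of primes $\mathfrak q\subsetneq\mathfrak p$. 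Already at $\mathfrak p=\mathfrak m$ your trichotomy collapses: typically $\depth M>c$ while $H^c_I(M)\ne 0$ by the very definition of $c$. Moreover, acyclicity of the cone of $C^{\cdot}_M(J)\to C^{\cdot}_M(I)$ would force $H^i_I(M)\cong H^i_J(M)$ for all $i>c$, which is both more than is needed (only the degree-$c$ map must be an isomorphism) and not derivable from a hypothesis constraining only primes of depth $\le c$. The paper's proof of (b) avoids localizing local cohomology entirely: it shows $\Ext^c_R(I^{\alpha}/J^{\alpha},M)=0$ for every $\alpha$ by proving $\grade(J^{\alpha}:_R I^{\alpha},M)\ge c+1$ via $\grade(\mathfrak a,M)=\inf\{\depth_{R_{\mathfrak p}}M_{\mathfrak p}:\mathfrak p\in V(\mathfrak a)\cap\Supp_R(M)\}$; here the depth condition enters in the correct direction, since a prime of $V(J^{\alpha}:_R I^{\alpha})\cap\Supp_R(M)$ with $\depth_{R_{\mathfrak p}}(M_{\mathfrak p})\le c$ would contradict $IR_{\mathfrak p}=JR_{\mathfrak p}$. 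That vanishing makes the map $f$ in the paper's exact sequence zero, so $H^c_I(M)\cong H^c_J(M)$, which is all that (b) requires. You should replace your localization step by this annihilator/grade computation.
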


\begin{proof}
Since $J\subset I$ it induces the following short exact sequence
\[
0\to I^{\alpha}/J^{\alpha}\to R/J^{\alpha}\to R/I^{\alpha}\to 0
\]
for each integer $\alpha \geq 1$. Moreover note that $\grade(I^{\alpha}/J^{\alpha},M)\geq c$ (see e.g. Proposition \ref{31} $(a)$). Then the application of the functor $\Hom_{R}(\cdot, M)$ to the last sequence yields the following exact sequence
\[
0\to \Ext^c_R(R/I^{\alpha}, M)\to \Ext^c_R(R/J^{\alpha}, M)\to \Ext^c_R(I^{\alpha}/J^{\alpha},M).
\]
Now the direct limit is an exact functor. So pass to the direct limit of this sequence we get that
\begin{equation*}
0\to H^c_{I}(M)\to H^c_{J}(M)\mathop\to\limits^f \lim\limits_{\longrightarrow} \Ext^c_R(I^{\alpha}/J^{\alpha},M)
\end{equation*}
Let $N:= \im f$. Then this induces the short exact sequence
\[
0\to H^c_{I}(M)\to H^c_{J}(M)\to N\to 0
\]
Again by applying the functor $\Hom_{R}(\cdot, M)$ to this sequence we get the following homomorphism
\[
\Ext^c_R(H^c_{J}(M), M)\to \Ext^c_R(H^c_{I}(M), M).
\]
Then from Proposition \ref{31} $(a)$ the statement $(a)$ is shown.

For the proof of $(b)$ note that the local cohomology is independent up to the radical. So without loss of generality we may assume that $IR_{\mathfrak{p}}= JR_{\mathfrak{p}}$ for all ${\mathfrak{p}}\in V(J)\cap \Supp_R(M)$ such that
\[
\depth_{R_\mathfrak{p}} (M_{\mathfrak{p}})\leq c.
\]
We claim that $\Ext^c_R(I^{\alpha}/J^{\alpha},M)= 0$ for all $\alpha \geq 1$. Because  $\Ann_R(I^{\alpha}/J^{\alpha})= J^{\alpha}:_R I^{\alpha}$  it will be enough to prove that $\grade (J^{\alpha}:_R I^{\alpha}, M)\geq c+1.$ It is well-known that
\[
\grade (J^{\alpha}:_R I^{\alpha},M)= \inf \{\depth_{R_\mathfrak{p}} (M_{\mathfrak{p}}): {\mathfrak{p}}\in V(J^{\alpha}:_R I^{\alpha})\cap \Supp_R(M)\}
\]
(see e.g. \cite[Proposition 1.2.10]{her}). Suppose on contrary that the claim is not true. Then there exists a prime ideal
\[
{\mathfrak{p}}\in V(J^{\alpha}:_R I^{\alpha})\cap \Supp_R(M)
 \]
such that $\depth_{R_\mathfrak{p}} (M_{\mathfrak{p}})\leq c$. Moreover $\Supp_R(R/(J^{\alpha}:_R I^{\alpha}))=V(J^{\alpha}:_R I^{\alpha})$ is contained in $V(J)$. This implies that ${\mathfrak{p}}\in V(J)\cap \Supp_R(M)$ and $R_{\mathfrak{p}}/(J_{\mathfrak{p}}^{\alpha}R_{\mathfrak{p}}:_{R_{\mathfrak{p}}} I_{\mathfrak{p}}^{\alpha}R_{\mathfrak{p}})\neq 0$ which is a contradiction to our assumption
$IR_{\mathfrak{p}} = JR_{\mathfrak{p}}$ . So we have $\Ext^c_R(I^{\alpha}/J^{\alpha},M)= 0$ which shows that $(b)$ is true by virtue of the exact sequence above.

Finally the proof of (c) is a consequence of Lemma \ref{3} (b).
\end{proof}

\begin{exmp} \label{ps1}
For instance, if $J = (x_1,\ldots,x_c)R$ is generated by an $M$-regular sequence it follows
that $H^i_J(M) = 0$ for all $i \not= c$.  Therefore the natural homomorphism
\[
\Hom_{\hat{R}^J}(\hat{M}^J, \hat{M}^J)   \to  \Hom_{R}(H^c_{J}(M),H^c_{J}(M))
\]
is an isomorphism (see Theorem \ref{2}). By view of Theorem \ref{21.3} this could be
a candidate for proving that
\[
\Hom_{\hat{R}^I}(\hat{M}^I, \hat{M}^I) \to  \Hom_{R}(H^c_{I}(M),H^c_{I}(M))
\]
is an isomorphism where $J \subset I$. At least if $R$ is $I$-adic complete.
\end{exmp}

In the following we give a non-trivial example of Corollary \ref{3a}.

\begin{exmp} \label{ps3}
Let $R= A/J$ where $A= k[|x,y,z,w|]$ be the formal power series ring over a field $k$ and $J=(xz,yw) =(x,y)\cap (y,z)\cap (z,w)\cap (x,w)$. Clearly $R$ is a complete local Gorenstein ring of $\dim (R)= 2$. Suppose that $I=(x,y,z)R$. Let $M= R/\mathfrak{p}_1\oplus R/\mathfrak{p}_2$ where $\mathfrak{p}_1= (x,y)R$, and $\mathfrak{p}_2= (y,z)R$. Then it follows by Independence of Base Theorem (see \cite[Proposition 2.14]{h}) that for each $i\in \mathbb{Z}$
\[
H^i_I(R/\mathfrak{p}_1)\cong H^i_{IR/\mathfrak{p}_1}(R/\mathfrak{p}_1) \text{ and } H^i_I(R/\mathfrak{p}_2)\cong H^i_{IR/\mathfrak{p}_2}(R/\mathfrak{p}_2).
\]
Moreover $\cd(I,M)= \max \{\cd(I,R/\mathfrak{p}): \mathfrak{p}\in \Ass_R(M)\}$ it induces that $\cd(I,M)= 1$. Here $\cd(I,M)$ denotes the cohomological dimension of $M$ with respect to $I$. Also $x+z\notin \Ass_R(M)$ so $\grade(I,M)= 1$. That is $H^i_I(M)= 0$ for all $i\neq 1.$ Then by Corollary \ref{3a} the natural homomorphism
\[
\Hom_{R}(M,M)\to \Hom_{R}(H^c_{I}(M),H^c_{I}(M))
\]
is an isomorphism.

\end{exmp}

\begin{cor}\label{0}
Let $(R,\mathfrak{m})$ denote a complete local ring of $\dim(R)=n$. Let $M$ be a finitely generated $R$-module.
Let $J\subseteq I$ be two ideals and  $H^i_{J}(M)= 0$ for all $i\neq c= \grade(J,M)= \grade(I,M)$.
Suppose that $\Rad IR_{\mathfrak{p}}= \Rad JR_{\mathfrak{p}}$ for all prime ideals
${\mathfrak{p}}\in V(J)$ with $\depth_{R_\mathfrak{p}} (M_\mathfrak{p})\leq c$. Then the natural homomorphism
\[
 \Hom_{R}(M, M) \to \Hom_{R}(H^c_{I}(M),H^c_{I}(M))
\]
is an isomorphism.
\end{cor}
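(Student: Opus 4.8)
The plan is to combine the isomorphism $\Hom_R(M,M)\cong\Hom_R(H^c_I(M),H^c_I(M))$ that one would get for the ideal $J$ from Corollary~\ref{3a} with the relative comparison map of Theorem~\ref{21.3}. Concretely, since $R$ is complete, it is $I$-adic (and $J$-adic) complete, so $\hat M^J = M$. Because $H^i_J(M) = 0$ for all $i \neq c$, Corollary~\ref{3a} gives an isomorphism
\[
\Hom_R(M,M) = \Hom_{\hat R^J}(\hat M^J,\hat M^J) \cong \Hom_R(H^c_J(M),H^c_J(M)).
\]
So the first step is to record this identification and to check that the natural map $\Hom_R(M,M)\to\Hom_R(H^c_J(M),H^c_J(M))$ is exactly the one realized by Corollary~\ref{3a} (it is: both are induced by functoriality of $H^c_J(\cdot)$ applied to multiplication endomorphisms, together with Proposition~\ref{31}(a)).

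The second step is to invoke Theorem~\ref{21.3}: under the stated hypothesis $\Rad IR_{\mathfrak p} = \Rad JR_{\mathfrak p}$ for all $\mathfrak p \in V(J)\cap\Supp_R(M)$ with $\depth_{R_{\mathfrak p}}(M_{\mathfrak p})\le c$, the natural homomorphism
\[
\Hom_R(H^c_J(M),H^c_J(M))\to\Hom_R(H^c_I(M),H^c_I(M))
\]
is an isomorphism. One small point to address is that Corollary~\ref{0} only assumes the radical condition for $\mathfrak p \in V(J)$ with the depth bound, whereas Theorem~\ref{21.3}(b) phrases it for $\mathfrak p\in V(J)\cap\Supp_R(M)$; but this is harmless since intersecting with $\Supp_R(M)$ only shrinks the set of primes over which the condition is required, so the hypothesis of Corollary~\ref{0} implies that of Theorem~\ref{21.3}(b). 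The third step is simply to compose: the map $\Hom_R(M,M)\to\Hom_R(H^c_I(M),H^c_I(M))$ factors (up to the above identifications, and by naturality/functoriality, which I would spell out via a commutative triangle through $\Hom_R(H^c_J(M),H^c_J(M))$) as the composite of two isomorphisms, hence is itself an isomorphism.

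I would present this as: (i) state that $R$ complete $\Rightarrow$ $\hat M^J = M$; (ii) apply Corollary~\ref{3a} with the ideal $J$; (iii) apply Theorem~\ref{21.3}(b); (iv) observe that the composite of the two maps is the natural map in the statement, by checking compatibility of the natural transformations — the map in Corollary~\ref{3a} and the map in Theorem~\ref{21.3}(a) are both induced by the canonical morphisms relating $M$, $H^c_J(M)$, $H^c_I(M)$ through $\Ext^c_R(-,M)$ and Proposition~\ref{31}(a), so they glue. The main obstacle I anticipate is precisely this last bookkeeping step: verifying that the "natural homomorphism $\Hom_R(M,M)\to\Hom_R(H^c_I(M),H^c_I(M))$" referred to in the statement coincides with the composite built from Corollary~\ref{3a} and Theorem~\ref{21.3}, rather than accidentally being a different map. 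This amounts to chasing the diagram
\[
\begin{array}{ccc}
\Hom_R(M,M) & \to & \Hom_R(H^c_J(M),H^c_J(M))\\
 & \searrow & \downarrow\\
 & & \Hom_R(H^c_I(M),H^c_I(M))
\end{array}
\]
and checking it commutes, using that all three maps arise functorially from the section functor $\Gamma_{(-)}$ and the isomorphisms of Proposition~\ref{31}. Once commutativity is in hand, the conclusion is immediate.
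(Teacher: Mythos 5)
Your proposal is correct and is essentially the paper's own proof: the paper derives Corollary \ref{0} exactly by combining Corollary \ref{3a} (applied to $J$, using that a complete ring is $J$-adically complete so $\hat M^J = M$) with Theorem \ref{21.3}(b). The extra bookkeeping you flag — checking that the composite agrees with the natural map, and that the hypothesis over $V(J)$ implies the one over $V(J)\cap\Supp_R(M)$ — is sound and is left implicit in the paper.
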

\begin{proof}
It follows from Theorem \ref{21.3} and Corollary  \ref{3a}.
\end{proof}


\end{document}